\newtheorem{theorem}{Theorem}[section]
\newtheorem{claim}{}[theorem]
\newtheorem{lemma}[theorem]{Lemma}
\newtheorem{corollary}[theorem]{Corollary}
\newtheorem{conjecture}[theorem]{Conjecture}
\theoremstyle{definition}
\newcommand{\cF}{\mathcal{F}}
\newcommand{\cM}{\mathcal{M}}
\newcommand{\cU}{\mathcal{U}}
\DeclareMathOperator{\si}{si}
\DeclareMathOperator{\cl}{cl}
\DeclareMathOperator{\PG}{PG}
\DeclareMathOperator{\GF}{GF}
\DeclareMathOperator{\wt}{wt}
\newcommand{\elem}{\epsilon}
\newcommand{\del}{\!\setminus\!}
\newcommand{\con}{/}
\newcommand{\dcon}{\con}
\newcommand{\fU}{\cU}
\title[Exponentially Dense Matroids]{Projective Geometries in Exponentially Dense Matroids. II}
\author[Nelson]{Peter Nelson}	
\address{School of Mathematics, Statistics and Operations Research, Victoria University of Wellington, Wellington, New Zealand}
\begin{document}

	\maketitle

	\begin{abstract}
	We show for each positive integer $a$ that, if $\cM$ is a minor-closed class of matroids not containing all rank-$(a+1)$ uniform matroids, then there exists an integer $c$ such that either every rank-$r$ matroid in $\cM$ can be covered by at most $r^c$ rank-$a$ sets, or $\cM$ contains the $\GF(q)$-representable matroids for some prime power $q$ and every rank-$r$ matroid in $\cM$ can be covered by at most $cq^r$ rank-$a$ sets. In the latter case, this determines the maximum density of matroids in $\cM$ up to a constant factor. 
	\end{abstract}

	\section{Introduction}
	
	If $M$ is a matroid and $a$ is a positive integer, then $\tau_a(M)$ denotes the \emph{$a$-covering number} of $M$, the minimum number of sets of rank at most $a$ in $M$ required to cover $E(M)$. We will prove the following theorem:

	\begin{theorem}\label{main}
		Let $a \ge 1$ be an integer. If $\cM$ is a minor-closed class of matroids, then there is an integer $c > 0$ such that either
		\begin{enumerate}
			\item\label{m1} $\tau_a(M) \le r(M)^{c}$ for all $M \in \cM$,
			\item\label{m2} there is a prime power $q$ so that $\tau_a(M) \le c q^{r(M)}$ for all $M \in \cM$ and $\cM$ contains all $\GF(q)$-representable matroids, or
			\item\label{m3} $\cM$ contains all rank-$(a+1)$ uniform matroids. 
		\end{enumerate}
	\end{theorem}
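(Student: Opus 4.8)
The plan is to dispose of alternative~(3) by a Ramsey-type argument and then to apply the Growth Rate Theorem, together with the main result of Part~I, to the classes that remain. I would first record two elementary facts about $\tau_a$. If $I$ is an independent set of $M$ with $|I| = a-1$, then partitioning the non-loops of $M/I$ into parallel classes and taking the closure in $M$ of each class together with $I$ yields a cover of $E(M)$ by flats of rank at most $a$, so $\tau_a(M) \le \tau_1(M/I) = |\si(M/I)|$; and if $e$ is a non-loop of $M$, then since the flats of rank at most $a-1$ of $M/e$ correspond to the flats of rank at most $a$ of $M$ containing $e$, whose union is all of $E(M)$, we get $\tau_a(M) \le \tau_{a-1}(M/e)$. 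A greedy selection then shows that if $r(M) = a+1$ and $\tau_a(M) > n^a$, then choosing $e_1, e_2, \dots$ in turn so that $e_i$ lies in no flat of rank at most $a$ spanned by a subset of $\{e_1,\dots,e_{i-1}\}$ (at most $n^a$ such flats) produces $n$ elements in general position, hence a restriction isomorphic to $U_{a+1,n}$. Combining this with the two inequalities and with Kung's bound on the number of points of a matroid of bounded line length, an induction on rank should yield a function $\beta$ with the property that $\tau_a(M) \ge \beta(n, r(M))$ forces a $U_{a+1,n}$-minor; hence, if $\cM$ omits some rank-$(a+1)$ uniform matroid $U_{a+1,n_0}$, then $\tau_a$ is bounded on $\cM$ in terms of rank alone, and it remains to establish alternative~(1) or~(2).

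Suppose next that $\cM$ also omits some line $U_{2,\ell}$, and let $h_\cM(r)$ be the largest number of points in a rank-$r$ matroid of $\cM$. By the Growth Rate Theorem, either $h_\cM(r) = O(r^2)$, or $h_\cM(r) = \Theta(q^r)$ for some prime power $q$ with $\cM$ containing every $\GF(q)$-representable matroid. Since $\tau_a(M) \le |\si(M/I)| \le h_\cM(r(M))$ for any independent set $I$ with $|I| = a-1$, the first case gives~(1) and the second gives~(2).

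The remaining and principal case is that $\cM$ contains every line $U_{2,\ell}$ but omits $U_{a+1,n_0}$, so that $a \ge 2$ and $h_\cM \equiv \infty$; here the argument above breaks down and the long lines must be factored out. The guiding principle is that a single long line---or, more generally, a dense flat of low rank---contributes only $O(1)$ to an optimal covering by rank-$a$ sets, so that $\tau_a(M)$ should be governed by the density of a ``collapsed'' minor all of whose lines are short. Concretely, for $M \in \cM$ of rank $r$, I would use the absence of a $U_{a+1,n_0}$-minor---via a structural analysis of $U_{a+1,n_0}$-minor-free matroids---to produce a minor $M^\circ$ of $M$ with no $U_{2,\ell_0}$-minor, where $\ell_0$ depends only on $a$ and $n_0$, together with a bound on $\tau_a(M)$ in terms of $r$, $r(M^\circ)$ and $|\si(M^\circ)|$. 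Applying the tools of the bounded-line-length case---the Growth Rate Theorem, and, to handle the exponential regime quantitatively, the main theorem of Part~I, which extracts arbitrarily large projective-geometry minors from matroids whose number of points is exponential in their rank---to the subclass of $\cM$ consisting of matroids with no $U_{2,\ell_0}$-minor then forces one of two conclusions: either $\tau_a$ is polynomially bounded in rank, giving~(1); or $\cM$ contains $\PG(n-1,q)$ for every $n$ and a single prime power $q$. In the latter case $q$ must be bounded---indeed $q \le n_0$, since otherwise $\PG(a,q)$ would have $U_{a+1,n_0}$ as a restriction, contradicting $U_{a+1,n_0} \notin \cM$---so $\cM$ contains all $\GF(q)$-representable matroids, and a complementary covering argument supplies the matching bound $\tau_a(M) \le c\,q^{r(M)}$, giving~(2). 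The genuine obstacle is this last case: turning the slogan ``a dense low-rank flat costs $O(1)$ in $\tau_a$'' into a precise reduction---constructing $M^\circ$ and bounding $\tau_a(M)$ by its density---is where the hypothesis $U_{a+1,n_0} \notin \cM$ must be used in an essential, non-Ramsey way; the elimination of~(3), the bounded-line-length case, and the passage from exponential point-density to a single projective-geometry order once Part~I is in hand are all comparatively routine.
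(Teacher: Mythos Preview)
Your reduction to the $a=1$ Growth Rate Theorem via $\tau_a(M) \le \tau_1(M/I)$ is sound when $\cM$ has bounded line length, but the third case is not a proof: you openly concede that ``turning the slogan \dots\ into a precise reduction'' is the genuine obstacle, and nothing in your sketch actually constructs the promised minor $M^\circ$ with no $U_{2,\ell_0}$-minor, let alone relates $\tau_a(M)$ to $|\si(M^\circ)|$ in a usable way. There is no reason to expect such a reduction to exist. A class like $\cM = \cU(a,b)$ with $a \ge 2$ contains arbitrarily long lines in every rank, and in a dense member of $\cM$ the long lines need not sit in any low-rank flat that can be ``collapsed'' without destroying the rank of $M$; contracting enough to kill all long lines may drop the rank to a constant, after which bounds on $|\si(M^\circ)|$ say nothing about $r(M)$. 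Even granting a miraculous $M^\circ$, your final step---``a complementary covering argument supplies the matching bound $\tau_a(M)\le c\,q^{r(M)}$''---is itself the hard theorem: one must show that exceeding $c q^{r(M)}$ in $\tau_a$ forces a $\PG(n-1,q')$-minor with $q'>q$, and this does not follow from the $a=1$ Growth Rate Theorem or from Part~I alone.

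The paper proceeds entirely differently and never splits on line length or reduces to $a=1$. It uses two results that operate directly on $\tau_a$: Theorem~\ref{mainpoly} (from Part~I), which says that $\tau_a(M) \ge r(M)^m$ in $\cU(a,b)$ forces a projective-geometry minor, and Theorem~\ref{mainexp}, proved in this paper, which says that $\tau_a(M) \ge c q^{r(M)}$ in $\cU(a,b)$ forces a $\PG(n-1,q')$-minor with $q'>q$. Given these, the deduction of Theorem~\ref{main} is short: assume $\cM \subseteq \cU(a,b)$; if (\ref{m1}) fails for every $c$ then Theorem~\ref{mainpoly} and pigeonhole on the finitely many $q' < b$ give a single $q$ with all $\GF(q)$-representable matroids in $\cM$; take $q$ maximal with this property, choose $n$ so that $\PG(n-1,q') \notin \cM$ for all $q'>q$, and Theorem~\ref{mainexp} yields the bound in (\ref{m2}). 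The substantive work is proving Theorem~\ref{mainexp}, which requires the stack machinery, weighted covers, and the connectivity arguments of Sections~3--8; none of this is visible in, or replaceable by, your proposed reduction.
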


	 This theorem also appears in [\ref{thesis}], and a weaker version, where the upper bound in (\ref{m2}) is replaced by $r(M)^{c}q^{r(M)}$, was proved in [\ref{part1}]; our proof is built with this weaker result as a starting point. $\tau_1(M)$ is just the number of points in $M$, and the above theorem was shown in this case by Geelen and Kabell [\ref{gk}].
	
	Theorem~\ref{main} resolves the `polynomial-exponential' part of the following conjecture of Geelen [\ref{openprobs}]:
	
	\begin{conjecture}[Growth Rate Conjecture]\label{grc}
		Let $a \ge 1$ be an integer. If $\cM$ is a minor-closed class of matroids, then there is an integer $c > 0$ so that either
		\begin{enumerate}
			\item $\tau_a(M) \le c r(M)$ for all $M \in \cM$, 
			\item $\tau_a(M) \le c r(M)^2$ for all $M \in \cM$ and $\cM$ contains all graphic matroids or all bicircular matroids, 
			\item there is a prime power $q$ so that $\tau_a(M) \le c q^{r(M)}$ for all $M \in \cM$ 
				and $\cM$ contains all $\GF(q)$-representable matroids, or
			\item\label{mciv} $\cM$ contains all rank-$(a+1)$ uniform matroids. 
		\end{enumerate}
	\end{conjecture}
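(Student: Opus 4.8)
The plan is to use the weaker theorem of [\ref{part1}] as a black box and then remove the polynomial factor. That result gives the same trichotomy except that (\ref{m2}) is weakened to $\tau_a(M) \le r(M)^{c_0}q^{r(M)}$; so we may assume that $\cM$ contains all $\GF(q)$-representable matroids for some fixed prime power $q$, that $U_{a+1,n_0}\notin\cM$ for some $n_0$ (else we are in (\ref{m3})), and that $\tau_a(M)\le r(M)^{c_0}q^{r(M)}$ for all $M\in\cM$; we must produce a constant $C$ with $\tau_a(M)\le Cq^{r(M)}$ for all $M\in\cM$. Note that the weak bound together with minor-closedness already forces $\cM$ to omit $\PG(k_0-1,q')$ for some $k_0=k_0(q')$ for every prime power $q'>q$, and — more usefully — the structural analysis behind [\ref{part1}] quantifies how nearly a member of $\cM$ of large rank can resemble a $\GF(q)$-geometry; this is what we draw on below.

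First I would reduce to \emph{round} matroids (those whose ground set is not a union of two proper flats). If $M\in\cM$ has rank $r$ and is not round, write $E(M)=F_1\cup F_2$ with the $F_i$ proper flats; then $\tau_a(M)\le\tau_a(M|F_1)+\tau_a(M|F_2)$, and since $r(F_1),r(F_2)\le r-1$ and $q\ge 2$, a short induction shows that a non-round matroid never beats the current supremum of $\tau_a(M)/q^{r(M)}$. Hence it suffices to prove $\tau_a(M)\le Cq^{r(M)}$ for \emph{round} $M\in\cM$, after which the bound for all $M$ follows by induction on rank. The gain from this reduction is that round matroids of large rank in such a class are rigid: those among them whose $\tau_a$ is not already $O(q^{r})$ are dense, and for a dense round $M\in\cM$ of rank $r$ the machinery of [\ref{part1}] yields, after deleting and contracting boundedly many elements, that $M$ is spanned by a $\GF(q)$-geometry — concretely, it has a spanning flat that is within a bounded-rank perturbation of $\PG(r-1,q)$.

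It remains to cover such a round $M$ by $O(q^{r})$ rank-$a$ sets. The key is to use a \emph{spread} of the geometry — or, when $a$ does not divide the relevant dimension, an efficient partial spread: $\PG(n-1,q)$ is covered by $\Theta(q^{n-a})$ rank-$a$ flats, and, crucially, a spread avoids the geometric-series loss that an iterated hyperplane decomposition would incur — it is precisely that loss which manufactures the spurious polynomial factor. I would lift such a spread through the bounded perturbation relating $M$ to the geometry, obtaining $O(q^{r-a})$ rank-$a$ sets of $M$ whose union omits only a piece of bounded rank; by the $r(M)^{c_0}q^{r(M)}$ bound applied to that piece its own $\tau_a$ is then bounded by a constant, and everything is absorbed into $C$. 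The main obstacle is exactly this lifting step: one must control the elements of $M$ lying outside the projective-geometry part, and since $\cM$ can contain matroids with arbitrarily long lines (inside direct sums) the usual crutch of $U_{2,q+2}$-freeness is unavailable — the roundness together with the density hypothesis $\tau_a(M)\ge(\text{const})\,q^{r}$ must be spent to force the non-geometric remainder to have bounded rank. Making ``the rest has bounded rank'' quantitative, and verifying that a (partial) spread survives the perturbation at a cost of only $O(1)$ additional flats, are the technical heart of the argument; the rest is bookkeeping resting only on the convergence of $\sum_i q^{-i}\,\mathrm{poly}(i)$.
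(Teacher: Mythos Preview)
The statement you are attempting to prove is labelled in the paper as a \emph{Conjecture}, not a theorem; the paper contains no proof of it. What the paper actually proves is Theorem~\ref{main}, which resolves only the polynomial-versus-exponential part of this conjecture (collapsing outcomes (1) and (2) into a single polynomial bound $\tau_a(M)\le r(M)^c$). The linear/quadratic dichotomy separating (1) from (2), together with the assertion that (2) forces $\cM$ to contain all graphic or all bicircular matroids, is left open for $a>1$. Your proposal does not address this dichotomy at all, so even if everything in it were correct it would yield at most Theorem~\ref{main}, not the conjecture.

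Restricting attention to the part the paper does prove, your approach still has a genuine gap and differs sharply from the paper's. You assert that ``the machinery of [\ref{part1}]'' shows a dense round $M\in\cM$ is, up to a bounded-rank perturbation, spanned by a $\PG(r-1,q)$; this is not what [\ref{part1}] provides. The relevant result there (Theorem~\ref{halfwaypoint}) produces a projective-geometry \emph{minor} of bounded rank, not a spanning restriction, and no bounded-perturbation structure theorem of the sort you invoke is available. Your ``lifting a spread through the perturbation'' step therefore rests on an unproved structural claim, and you yourself flag controlling the non-geometric remainder as the main obstacle without resolving it.

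The paper's actual argument for removing the polynomial factor is entirely different: it constructs a large $(q,h)$-stack restriction in a dense contraction-minor (Lemma~\ref{getstack}), uses weak roundness to place this stack alongside a large $\PG(m-1,q)$-minor (Lemmas~\ref{exploitroundness} and~\ref{maintechnicalmodified}), and then shows that a stack sitting inside a matroid with a spanning projective geometry over $\GF(q)$ forces a $\PG(n-1,q')$-minor with $q'>q$ (Lemmas~\ref{stackfindflat} and~\ref{stackwin}). No spreads, no perturbation-lifting, and no direct covering argument are used; the improvement over [\ref{part1}] comes from the stack machinery feeding back into Theorem~\ref{halfwaypoint}.
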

	
	This conjecture was proved for $a = 1$ by Geelen, Kabell, Kung and Whittle~[\ref{gk},\ref{gkw},\ref{gw}] and is known as the `Growth Rate Theorem'.

	If (\ref{mciv}) holds, then $\tau_a(M)$ is not bounded by any function of $r(M)$ for all $M \in \cM$, as a rank-$(a+1)$ uniform matroid (and consequently any matroid with such a minor) can require arbitrarily many rank-$a$ sets to cover. Our bounds on $\tau_a$ are thus given with respect to some particular rank-$(a+1)$ uniform minor that is excluded. We prove Theorem~\ref{main} as a consequence of the two theorems below; the first is proved in [\ref{part1}], and the second is the main technical result of this paper. 
	
	\begin{theorem}\label{mainpoly}
		For all integers $a,b,n$ with $n \ge 1$ and $1 \le a < b$, there is an integer $m$ such that, if $M$ is a matroid of rank at least $2$ with no $U_{a+1,b}$-minor and $\tau_a(M) \ge r(M)^m$, then $M$ has a rank-$n$ projective geometry minor. 
	\end{theorem}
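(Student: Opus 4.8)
The plan is to establish the equivalent statement that a matroid with no $U_{a+1,b}$-minor and no rank-$n$ projective geometry minor has $a$-covering number at most a fixed polynomial in its rank, with the polynomial uniform in $a,b,n$; Theorem~\ref{mainpoly} is then immediate (increase $m$ by one and use $r(M)\ge 2$). First I would record two harmless reductions. We may assume $n\ge a+1$, since a rank-$n$ geometry with $n<a+1$ is only easier to find. And the relevant field orders are bounded: for $q\ge b-1$ the geometry $\PG(a,q)$ has a $U_{a+1,q+1}$-restriction (take a normal rational curve), so $\PG(n-1,q)$ has a $U_{a+1,b}$-minor; hence a matroid with no $U_{a+1,b}$-minor has no $\PG(n-1,q)$-minor for any $q\ge b-1$, and it suffices to produce a $\PG(n-1,q)$-minor for some prime power $q\le b-2$.

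Next I would carry out a connectivity reduction to the case where $M$ is vertically $k$-connected, for $k=k(a,b,n)$ chosen large at the end. This uses the ``round minor'' technology underlying the Growth Rate Theorem: if $M$ is not suitably connected it has a proper minor of rank comparable to $r(M)$ whose $a$-covering number is still super-polynomial in its own rank. The key input preventing the rank from collapsing to a constant is Kung's theorem (and its $\tau_a$-analogue), which caps $\tau_a$ of a no-$U_{a+1,b}$-minor matroid by an exponential in the rank, so $\tau_a\ge r^m$ keeps the rank large throughout the iteration. One thereby arrives at a vertically $k$-connected minor still satisfying the density hypothesis, at the cost of shrinking the exponent $m$ to a smaller but still unboundedly large value.

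The heart of the proof is turning density into algebra inside a vertically highly connected matroid. Using the already-known weaker growth-rate estimates --- Geelen--Kabell for $a=1$ and the Geelen--Kung--Whittle and Part~I machinery for general $a$ --- I would extract, inside some minor, a long ``book'': a family of bounded-rank flats $F_1,\dots,F_t$ sharing a common subflat, with $t$ large and the ambient rank still large, whose pages are mutually generic because of the high connectivity. Applying Ramsey's theorem to the finitely many patterns describing how consecutive pages meet and are partially coordinatised, one passes to a coherent sub-book: either the pages are pairwise skew, in which case the density forces a $U_{a+1,b}$-minor and we have a contradiction, or the pages fit together over a single fixed prime power $q$. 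A sufficiently long, sufficiently connected coherent $q$-book should then contain, as a minor, either $\PG(n-1,q)$ (and we are done) or $\PG(n-1,q')$ for some smaller prime power $q'$ arising from a field collapse, or else have only polynomial $a$-covering number. Since the relevant $q$ lie in the bounded range above, I would close the argument by downward induction on $q$, with base case the binary matroids having no $\PG(n-1,2)$-minor, where a polynomial density bound is classical (and $\tau_a\le\tau_1$).

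I expect the genuine obstacle to be this middle step, and specifically the \emph{intermediate-growth} regime: one cannot simply boost super-polynomial density to exponential density and invoke an ``exponentially dense implies projective geometry'' result, because ruling out intermediate growth is precisely what the theorem asserts. The delicate points will be keeping the book's pages generic after each connectivity reduction, ensuring that the Ramsey step genuinely pins down one field rather than a varying one, and --- most of all --- tracking how the exponent $m$ degrades through the connectivity reduction, the Ramsey extraction, and the successive layers of the $q$-induction, so that a single $m=m(a,b,n)$ works in the end.
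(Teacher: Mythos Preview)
This paper does not prove Theorem~\ref{mainpoly}; it is imported from Part~I [\ref{part1}], with the only remark being that it is the case $q=1$ of Theorem~\ref{halfwaypoint} (also quoted from Part~I): when $q=1$ the hypothesis $\tau_a(M)\ge r(M)^{f}q^{r(M)}$ is purely polynomial and the conclusion yields a $\PG(n-1,q')$-minor for some prime power $q'>1$. So there is no in-paper argument to compare against line by line.

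Independently of that, your proposal has a genuine gap, one you identify yourself. The two opening reductions --- bounding the relevant field orders below $b$, and passing to a round or highly connected minor while retaining super-polynomial $\tau_a$ --- are standard and sound; this paper uses weak roundness (Lemma~\ref{roundnessreduction}) in just this way. But the heart of your plan, building a long ``book'' of bounded-rank pages, applying Ramsey to their coordinatisation patterns, and inducting downward on $q$, is a hope rather than an argument. You write that ``ruling out intermediate growth is precisely what the theorem asserts'' and that the middle step is where you ``expect the genuine obstacle''; that is correct, and nothing in the sketch closes it. A Ramsey pass over pages of bounded rank does not by itself assemble a projective geometry whose rank $n$ grows with the input, and the assertion that a long coherent $q$-book ``should then contain, as a minor, either $\PG(n-1,q)$ \dots\ or else have only polynomial $a$-covering number'' is the very dichotomy to be established. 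As the form of Theorem~\ref{halfwaypoint} suggests, the actual argument in Part~I treats polynomial and exponential density uniformly through a parameter $q\ge 1$, rather than via this kind of Ramsey bootstrap.
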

	
	\begin{theorem}\label{mainexp}
		For all integers $a,b,n,q$ with $n \ge 1$, $q \ge 2$ and $1 \le a < b$, there is an integer $c$ such that, if $M$ is a matroid with no $U_{a+1,b}$-minor and $\tau_a(M) \ge cq^{r(M)}$, then $M$ has a rank-$n$ projective geometry minor over a finite field with more than $q$ elements.
	\end{theorem}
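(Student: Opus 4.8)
The plan is to bootstrap from the two results already stated. First, the bounded-rank case is free: if $r(M) \le r_0$ then $M$ has no rank-$(r_0+1)$ projective geometry minor, so Theorem~\ref{mainpoly} (with its rank parameter set to $r_0+1$) gives a constant $m_0 = m_0(a,b,r_0)$ with $\tau_a(M) < r(M)^{m_0} \le r_0^{m_0}$; hence taking $c \ge r_0^{m_0}$ there is simply no $M$ of rank at most $r_0$ with $\tau_a(M) \ge c\,q^{r(M)}$. Since $q \ge 2$ we have $q^{r} \ge r^{m}$ for all $r$ past a threshold depending only on $m$ and $q$, so by choosing $r_0$ above the relevant threshold we may assume for the rest of the argument that $r(M)$ is as large as we wish and that $\tau_a(M) \ge r(M)^{m}$ for whatever $m$ we need. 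Applying Theorem~\ref{mainpoly} with a rank parameter $n_1$ to be chosen large then produces a $\PG(n_1-1,q_0)$-minor of $M$ for some prime power $q_0$; if $q_0 > q$ then restricting this minor to a rank-$n$ subspace finishes the proof, so we may assume $q_0 \le q$, and there are only finitely many such $q_0$.

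The core of the argument is a \emph{bootstrapping lemma}: for every prime power $q_0$ and all $a,b,n,k$ there are integers $n_0$ and $c_0$ so that, if $N$ has no $U_{a+1,b}$-minor, has a $\PG(n_0-1,q_0)$-minor, and satisfies $\tau_a(N) \ge c_0\,q_0^{\,r(N)}$, then $N$ has a $\PG(k-1,q_0')$-minor for some prime power $q_0' > q_0$. Granting this, I would iterate it with $N = M$. A $\PG(r-1,q_0)$ has $a$-covering number only about $q_0^{\,r-a}$, so the hypothesis $\tau_a(M) \ge c\,q^{r(M)}$, with $c$ chosen astronomically large and $q_0 \le q$, says $M$ is vastly denser than any $\GF(q_0)$-geometry of its rank; feeding the $\PG(n_1-1,q_0)$-minor into the lemma yields a $\PG(n_2-1,q_1)$-minor with $q_1 > q_0$, and since $\tau_a(M) \ge c\,q^{r(M)} \ge c\,q_1^{\,r(M)}$ (as long as $q_1 \le q$) the hypotheses persist, so we repeat. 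Each step strictly increases the field, so after at most $q$ steps we reach a projective geometry minor over a field with more than $q$ elements; there are boundedly many steps, each requiring only a bounded rank of input geometry and a bounded constant factor, so choosing $n_1$ large enough to feed the whole chain and $c$ larger than every $c_0$ that arises (and than $r_0^{m_0}$) makes the iteration go through with $c$ depending only on $a,b,n,q$.

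To prove the bootstrapping lemma I would work inside the flat of $N$ spanned by a $\PG(n_0-1,q_0)$-minor. Minor-cleaning of the type carried out in [\ref{part1}] lets me pass to a minor $N'$ in which a large projective geometry $\PG(k_0-1,q_0)$ occurs as a \emph{spanning} restriction, at the cost of only a bounded factor in the $a$-covering number; so $N'$ is a $\GF(q_0)$-geometry together with many extra points, and $\tau_a(N')$ is still enormous relative to $q_0^{\,r(N')}$. The exclusion of $U_{a+1,b}$ severely restricts how the extra points sit over the geometry, and using the homogeneity of $\PG(k_0-1,q_0)$ together with a pigeonhole/Ramsey argument — this is the point at which the $a=1$ Growth Rate Theorem machinery of Geelen, Kung and Whittle, and the reduction of $\tau_a$-questions to point-counting questions from [\ref{part1}], get invoked — one shows that being this much denser than the ambient $\GF(q_0)$-geometry forces either a $U_{2,\ell}$-minor with $\ell$ gigantic or a $\GF(q_0')$-geometry minor outright; in the former case, a very long line lying over a large $\GF(q_0)$-projective geometry can be promoted to a $\GF(q_0')$-geometry minor with $q_0' > q_0$. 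Restricting to a rank-$k$ subspace delivers the claimed $\PG(k-1,q_0')$-minor.

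The main obstacle is precisely this bootstrapping lemma, and inside it the feature absent when $a=1$: a rank-$a$ flat of $M$ is completely unconstrained by the exclusion of $U_{a+1,b}$ — it can itself be an arbitrarily dense matroid, even a projective geometry over a huge field — so the excess $a$-density of $M$ can be ``hidden'' inside the rank-$a$ pieces rather than surfacing as long lines or large sub-geometries in the usual way. Getting past this requires choosing the contractions so that this hidden density is exposed as genuine projective-geometry-like structure of larger order (roughly, contracting so that a dense rank-$a$ flat becomes a long line in a controlled rank), and then verifying that the exclusion of $U_{a+1,b}$ upgrades a denser-than-$\GF(q_0)$ configuration to an actual $\GF(q_0')$-geometry minor. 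Once the lemma is in hand, propagating the constants and rank parameters through the boundedly many iterations, and the reductions to large rank and to spanning restrictions, are routine.
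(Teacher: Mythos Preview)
Your outer iteration is fine, but your ``bootstrapping lemma'' is essentially the whole theorem, and your sketch of its proof has a genuine gap. The step where you ``pass to a minor $N'$ in which a large projective geometry $\PG(k_0-1,q_0)$ occurs as a spanning restriction, at the cost of only a bounded factor in the $a$-covering number'' is not justified and is, in general, false as stated: if the $\PG$-minor is $N/C\backslash D$, then contracting $C$ costs you a factor of roughly $\binom{b-1}{a}^{r_N(C)}$ in $\tau_a$ by Corollary~\ref{kdensitycon}, and since $\binom{b-1}{a}$ can be much larger than $q_0$ this can swamp the $q_0^{\,r(N)}$ hypothesis entirely. Nothing in [\ref{part1}] gives you a spanning $\PG$-restriction while preserving exponential density to base $q_0$; Theorem~\ref{halfwaypoint} only produces a $\PG$-minor, not a restriction, and the gap between the two is precisely the content of this paper.

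What you are missing is the notion of a \emph{stack}: a tower of small non-$\GF(q_0)$-representable restrictions, built in Lemma~\ref{getstack} directly from the density hypothesis (crucially, \emph{before} any projective geometry has been located). The stack is what survives the contractions needed to make a $\PG$ spanning: Lemmas~\ref{skewstack} and~\ref{reduceconn} let one arrange the stack skew to a dense set, so that after reapplying Theorem~\ref{halfwaypoint} one has a weakly round minor with both a spanning $\PG(r-1,q_0)$-restriction and a large stack restriction (Lemma~\ref{maintechnicalmodified}). Only then does the ``dense-over-$\PG$'' argument you allude to go through, and even there it is not a Ramsey or long-line argument but a connectivity calculation (Lemma~\ref{stackfindflat}) producing a rank-$k$ flat $K$ with $\sqcap(X,K) \le \tfrac{1}{2}r(X)$ for all $X$ in the geometry; contracting $K$ then genuinely boosts $\tau_a$ past $q_0^{\,r}$ by a controlled super-exponential amount, and Theorem~\ref{halfwaypoint} finishes. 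Your final paragraph correctly identifies the obstacle---that rank-$a$ flats are unconstrained by excluding $U_{a+1,b}$---but ``contracting so that a dense rank-$a$ flat becomes a long line'' does not address it; the stack construction does.
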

		
	\section{Preliminaries}
	
	We use the notation of Oxley [\ref{oxley}]. A rank-$1$ flat is a \emph{point}, and a rank-$2$ flat is a \emph{line}. If $M$ is a matroid, and $X,Y \subseteq E(M)$, then $\sqcap_M(X,Y)$ denotes the \emph{local connectivity} between $X$ and $Y$ in $M$, defined by $\sqcap_M(X,Y) = r_M(X) + r_M(Y) - r_M(X \cup Y)$. If $\sqcap_M(X,Y) = 0$, then $X$ and $Y$ are \emph{skew} in $M$. Additionally, we write $\elem(M)$ for $\tau_1(M)$, the number of points in a matroid $M$. 
	
	For integers $a$ and $b$ with $1 \le a < b$, we write $\cU(a,b)$ for the class of matroids with no $U_{a+1,b}$-minor. The first tool in our proof is a theorem of Geelen and Kabell [\ref{gkb}] which shows that $\tau_a$ is bounded as a function of rank across $\cU(a,b)$. 
	
	\begin{theorem}\label{kdensity}
			Let $a$ and $b$ be integers with $1 \le a < b$. If $M \in \cU(a,b)$ and $r(M) > a$,
			then $\tau_a(M) \le \binom{b-1}{a}^{r(M)-a}$. 
		\end{theorem}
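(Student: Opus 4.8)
The plan is to induct on $r(M)$, the heart of the matter being the case $r(M)=a+1$, where I would prove the stronger bound $\tau_a(M)\le\binom{b-1}{a}$. (When $r(M)\le a$ there is nothing to do, since $\tau_a(M)=1\le\binom{b-1}{a}$ as $a<b$.)

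For the case $r(M)=a+1$, I would take an extremal object: let $I\subseteq E(M)$ be maximal subject to the condition that every subset of $I$ of size at most $a+1$ is independent in $M$. If $|I|<a+1$ then $I$ is independent and, since $r_M(I)=|I|\le a<r(M)$, does not span $M$; choosing $y\notin\cl_M(I)$, the set $I\cup\{y\}$ would still have the defining property and be larger, a contradiction. So $|I|\ge a+1$, and in particular $r_M(I)=a+1$. If $|I|\ge b$, then any $b$-element $I'\subseteq I$ satisfies $r_M(I')=a+1$ with every $(a+1)$-subset a basis, so $M|I'\cong U_{a+1,b}$, contradicting $M\in\cU(a,b)$; hence $a+1\le|I|\le b-1$. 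Now I claim $E(M)=\bigcup_{J}\cl_M(J)$, the union taken over all $a$-element subsets $J$ of $I$. Given $x\in E(M)$: if $x\in I$ then $x\in\cl_M(J)$ for any $a$-subset $J$ of $I$ with $x\in J$; if $x\notin I$ then by maximality $I\cup\{x\}$ contains a dependent set of size at most $a+1$, which must have the form $\{x\}\cup J_0$ with $J_0\subseteq I$ independent, so $x\in\cl_M(J_0)\subseteq\cl_M(J)$ for any $a$-subset $J\supseteq J_0$ of $I$. Each $\cl_M(J)$ has rank at most $a$ and there are at most $\binom{|I|}{a}\le\binom{b-1}{a}$ of them, so $\tau_a(M)\le\binom{b-1}{a}$.

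For the inductive step, let $r=r(M)>a+1$ and choose a non-loop element $e$ of $M$. Then $M/e\in\cU(a,b)$ has rank $r-1>a$, so by induction there is a cover $F_1,\dots,F_t$ of $E(M/e)=E(M)\setminus\{e\}$ by sets of rank at most $a$ in $M/e$ with $t\le\binom{b-1}{a}^{\,r-1-a}$. For each $i$, the restriction $M|(F_i\cup\{e\})$ lies in $\cU(a,b)$ and has rank $r_{M/e}(F_i)+1\le a+1$, so by the case $r\le a+1$ already handled its ground set $F_i\cup\{e\}$ is a union of at most $\binom{b-1}{a}$ sets of rank at most $a$ in $M$. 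Since $E(M)=\bigcup_{i=1}^t(F_i\cup\{e\})$, taking the union of these covers gives a cover of $E(M)$ by at most $t\binom{b-1}{a}\le\binom{b-1}{a}^{\,r-a}$ sets of rank at most $a$, as required.

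The argument is genuinely short, and the only delicate point is the base case $r(M)=a+1$: one must pick the right extremal set — a maximal set whose $(a+1)$-subsets are all independent — and then verify both that maximality forces every element into the closure of some $a$-subset of $I$, and that excluding $U_{a+1,b}$ bounds $|I|$ by $b-1$. After that, the inductive step is routine bookkeeping with contraction, the identity $r_{M/e}(F_i)=r_M(F_i\cup\{e\})-1$, and the exponent arithmetic.
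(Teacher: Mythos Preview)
Your proof is correct and follows essentially the same approach as the paper: the maximal set $I$ you choose is precisely the paper's maximal $X$ with $M|X \cong U_{a+1,|X|}$, and your inductive step unwinds the same inequality $\tau_a(M) \le \binom{b-1}{a}\,\tau_a(M/e)$ that the paper states via $\tau_{a+1}(M) \le \tau_a(M/e)$.
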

		\begin{proof}
			We first prove the result when $r(M) = a+1$, then proceed by induction. If $r(M) = a+1$, then observe that 
			$M|B \cong U_{a+1,a+1}$ for any basis $B$ of $M$; let $X \subseteq E(M)$ be maximal such that
			$M|X \cong U_{a+1,|X|}$. We may assume that $|X| < b$, and by maximality of $X$, every $e \in E(M)-X$ 
			is spanned by a rank-$a$ set of $X$. Therefore, $\tau_a(M) \le \binom{|X|}{a} \le \binom{b-1}{a}$. 
			
			Suppose that $r(M) > a+1$, and inductively assume that the result holds for matroids of smaller rank. 
			Let $e \in E(M)$. We have $\tau_{a+1}(M) \le \tau_a(M \con e) \le \binom{b-1}{a}^{r(M)-a-1}$ by induction, 
			and by the base case each rank-$(a+1)$ set in $M$ admits a cover with at most $\binom{b-1}{a}$ sets of 
			rank at most $a$. Therefore $\tau_a(M) \le \binom{b-1}{a} \tau_{a+1}(M) \le \binom{b-1}{a}^{r(M)-a}$, 
			as required. 
		\end{proof}
	
	The base case of this theorem gives $\tau_a(M) \le \binom{b-1}{a}\tau_a(M \con e)$ for all $M \in \cU(a,b)$ and $e \in E(M)$; an inductive argument yields the following:
	\begin{corollary}\label{kdensitycon}
		Let $a$ and $b$ be integers with $1 \le a < b$. If $M \in \cU(a,b)$ and $C \subseteq E(M)$, then $\tau_a(M \con C) \ge \binom{b-1}{a}^{-r_M(C)}\tau_a(M)$. 
	\end{corollary}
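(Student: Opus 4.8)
The plan is to prove the corollary in two steps: first establish the single-element contraction bound $\tau_a(M) \le \binom{b-1}{a}\,\tau_a(M\con e)$ for every $M \in \cU(a,b)$ and every $e \in E(M)$ (this is the statement flagged immediately before the corollary), and then bootstrap it to an arbitrary contraction set $C$ by induction on $r_M(C)$.

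For the first step, fix $e \in E(M)$ and let $\cF$ be a minimum family of sets of rank at most $a$ in $M\con e$ covering $E(M\con e)$, so $|\cF| = \tau_a(M\con e)$. For each $X \in \cF$ we have $r_M(X\cup e) \le r_{M\con e}(X) + r_M(\{e\}) \le a+1$, so the restriction $M|(X\cup e)$ lies in $\cU(a,b)$ and has rank at most $a+1$; applying the $r(M)=a+1$ case of Theorem~\ref{kdensity} (or doing nothing, if this rank is at most $a$) shows that $X\cup e$ is the union of at most $\binom{b-1}{a}$ sets each of rank at most $a$ in $M$. Taking the union of these subfamilies over all $X \in \cF$ produces a family of at most $\binom{b-1}{a}\,|\cF|$ sets of rank at most $a$ in $M$ whose union contains $\bigl(\bigcup_{X \in \cF} X\bigr)\cup\{e\} = E(M)$, giving $\tau_a(M) \le \binom{b-1}{a}\,\tau_a(M\con e)$.

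For the second step I induct on $r_M(C)$. If $r_M(C)=0$, then every element of $C$ is a loop of $M$, so $M \con C = M\del C$ and $\tau_a(M\con C)=\tau_a(M)=\binom{b-1}{a}^{0}\tau_a(M)$. If $r_M(C)\ge 1$, choose $e\in C$ that is not a loop and set $C'=C-e$. Then $M\con e\in\cU(a,b)$ since the class is minor-closed, and $r_{M\con e}(C') = r_M(C'\cup e)-r_M(\{e\}) = r_M(C)-1$, so the inductive hypothesis applied to $M\con e$ gives $\tau_a\bigl((M\con e)\con C'\bigr)\ge \binom{b-1}{a}^{-(r_M(C)-1)}\tau_a(M\con e)$. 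Combining this with the single-element bound $\tau_a(M\con e)\ge\binom{b-1}{a}^{-1}\tau_a(M)$ from the first step and using $M\con C = (M\con e)\con C'$ yields $\tau_a(M\con C)\ge \binom{b-1}{a}^{-r_M(C)}\tau_a(M)$, as required.

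Neither step presents a genuine obstacle. The only points requiring care are the correct invocation of the base case of Theorem~\ref{kdensity} to split a rank-$(a+1)$ restriction into $\binom{b-1}{a}$ rank-$a$ pieces (and the observation that a rank-$\le a$ restriction needs no splitting), the bookkeeping of how $r_M(C)$ drops by exactly $1$ when a single non-loop element of $C$ is contracted, and the degenerate situations involving loops or the empty matroid, which are handled by the usual conventions.
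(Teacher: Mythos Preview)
Your proposal is correct and follows exactly the route the paper sketches: derive the single-element bound $\tau_a(M)\le\binom{b-1}{a}\tau_a(M\con e)$ from the $r(M)=a+1$ case of Theorem~\ref{kdensity}, then induct on $r_M(C)$. The paper states only that ``an inductive argument yields'' the corollary, and your write-up supplies precisely those details.
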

	Our starting point in our proof is the main technical result of [\ref{part1}]. Note that this theorem gives Theorem~\ref{mainpoly} when $q = 1$. 
	
	\begin{theorem}\label{halfwaypoint}
		There is an integer-valued function $f_{\ref{halfwaypoint}}(a,b,n,q)$ so that, for any integers $1 \le a < b$, $q \ge 1$ and $n \ge 1$, if $M \in \cU(a,b)$ satisfies $r(M) > 1$ and  $\tau_a(M) \ge r(M)^{f_{\ref{halfwaypoint}}(a,b,n,q)}q^{r(M)}$, then $M$ has a $\PG(n-1,q')$-minor for some prime power $q' > q$. 
	\end{theorem}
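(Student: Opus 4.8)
The plan is to argue by contradiction in the style of the Geelen--Kung--Whittle proof of the Growth Rate Theorem, but carrying the covering number $\tau_a$ and the class $\cU(a,b)$ through the argument by means of Theorem~\ref{kdensity} and Corollary~\ref{kdensitycon}. Fix $a,b,n,q$, and suppose that for the value $f$ of $f_{\ref{halfwaypoint}}(a,b,n,q)$ we eventually choose, the statement fails: let $M$ be a counterexample with $|E(M)|$ minimum, so $M \in \cU(a,b)$, $r(M) > 1$, $\tau_a(M) \ge r(M)^f q^{r(M)}$, and $M$ has no $\PG(n-1,q')$-minor for any prime power $q' > q$.

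First I would pin down the structure of $M$. Since $\tau_a$ is unchanged by deleting a loop or a parallel element, and since $\tau_a(M) \le \tau_a(M_1)+\tau_a(M_2)$ both when $M = M_1\oplus M_2$ and when $E(M) = F_1\cup F_2$ with $F_1,F_2$ flats, minimality of $|E(M)|$ forces $M$ to be simple, connected, and \emph{round}: $E(M)$ is not a union of two flats of rank less than $r(M)$. (In the partition case, some $M|F_i$ is again a counterexample: halving $\tau_a$ and dropping at least one rank is absorbed because $q\ge 2$ and $r_M(F_i)<r(M)$, and $r_M(F_i)>1$ because a rank-$\le 1$ restriction has $\tau_a\le 1 < \tfrac12 r(M)^f q^{r(M)}$.) The key feature is that roundness is preserved under contraction, so every contraction-minor of $M$ is again round, which is what makes a density-increment argument possible.

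Next comes the engine. Since $\tau_a(M)\le\elem(M)$ and $\tau_a(M)\ge r(M)^fq^{r(M)}>\tfrac{q^{r(M)}-1}{q-1}$, Kung's theorem already gives $M$ a long-line minor; more usefully, the $a=1$ Growth Rate Theorem~[\ref{gk},\ref{gkw},\ref{gw}] implies that a matroid in $\cU(a,b)$ all of whose minors avoid $\PG(k-1,q')$ for every prime power $q'>q$ has $\tau_a$ bounded by a constant (depending on $a,b,k,q$) times $q^{r}$. Using this together with roundness, I would build the required geometry one coordinate at a time: starting from a long line (which, since the density beats the bound for $\cU(1,q''+1)$ with $q''$ the least prime power exceeding $q$, supports a $\PG(1,q')$-restriction with $q'>q$ a prime power --- here one genuinely needs the Growth Rate Theorem, not Kung alone, when $q+1$ is not a prime power), then repeatedly contract a single well-chosen element to expose a new coordinate in general position over the partial geometry. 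Each contraction costs at most a factor $\binom{b-1}{a}$ in $\tau_a$ by Corollary~\ref{kdensitycon} and one unit of rank, so after at most $n$ steps the matroid still exceeds the bounded-density threshold for its smaller rank, provided $f$ was chosen large enough that $r(M)^f$ dominates $\binom{b-1}{a}^n$; roundness then lets the next long line be positioned, and iterating to rank $n$ yields a $\PG(n-1,q')$-minor with $q'>q$, contradicting the choice of $M$. The exponent $f_{\ref{halfwaypoint}}(a,b,n,q)$ is assembled from this $n$-fold recursion, the constant in the bounded-density estimate, and the (easy, via Theorem~\ref{kdensity}) need to dispatch the cases $r(M)\le n$ directly.

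The hard part will be the growing step: proving that $n$ single-element contractions really do suffice --- that the geometry can be extracted essentially \emph{in place} rather than by contracting the ambient rank all the way down to $n$, which would incur the unaffordable factor $\binom{b-1}{a}^{r(M)}$ against only a $q^{r(M)}$-sized budget. Carrying this out with $\tau_a$ rather than $\elem$ as the density measure is exactly the technically heaviest part: one must locate, inside a round matroid of enormous $\tau_a$-density, a single element together with a long line through it whose projection lands in general position over the geometry built so far, and then certify that contracting that element leaves enough covering-number density to repeat. A secondary subtlety, already flagged above, is that when $q+1$ is not a prime power Kung's theorem alone does not produce a line long enough to support a field larger than $q$, so the $a=1$ Growth Rate Theorem is needed to exclude the possibility that $M$ sits at density $\Theta(q^{r(M)})$.
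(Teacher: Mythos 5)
There is no in-paper proof to compare against here: Theorem~\ref{halfwaypoint} is imported verbatim as ``the main technical result of [\ref{part1}]'' (Part~I of this series), and the present paper's whole purpose is to strengthen it. Your proposal therefore has to stand on its own as a proof, and it does not: it is a strategy sketch whose central step is named but not carried out, and whose main external input is circular.

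Concretely: first, the ``engine'' is not available. You assert that the $a=1$ Growth Rate Theorem implies that a matroid in $\cU(a,b)$ all of whose minors avoid $\PG(k-1,q')$ for every prime power $q'>q$ has $\tau_a \le c\,q^{r}$. The $a=1$ theorem bounds $\elem(M)$, not $\tau_a(M)$, and it cannot be applied to this class: for $a\ge 2$ the class contains every rank-$2$ uniform matroid, so it lands in outcome~(\ref{mciv}) of the $a=1$ statement and yields no bound on $\elem$ at all (e.g.\ $U_{2,m}$ has $m$ points but $\tau_a=1$). The bound $\tau_a\le cq^r$ you want is precisely the contrapositive of Theorem~\ref{mainexp}, the main theorem of this paper, which is deduced \emph{from} Theorem~\ref{halfwaypoint}; invoking it here begs the question. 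Second, even granting an engine, you explicitly defer the crux --- the ``growing step'' showing that each new coordinate of the geometry can be exposed by a bounded number of single-element contractions, in general position over the geometry built so far, while retaining enough $\tau_a$-density to continue. That step is exactly the content of Part~I (it occupies most of that paper), and flagging it as ``the hard part'' without executing it leaves the proof unestablished. A smaller point: your roundness reduction halves $\tau_a$ while dropping the rank by at least one, which requires $q\ge 2$, whereas the statement is claimed for $q\ge 1$; the $q=1$ case (Theorem~\ref{mainpoly}) is not dispatched by your argument either.
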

		
		\section{Stacks}
	
	We now define an obstruction to $\GF(q)$-representability. If $q$ is a prime power and $h$ and $t$ are nonnegative integers, then a matroid $S$ is a \emph{$(q,h,t)$-stack} if there are pairwise disjoint subsets $F_1, F_2, \dotsc, F_h$ of $E(S)$ such that the union of the $F_i$ is spanning in $S$, and for each $i \in \{1, \dotsc, h\}$ the matroid $(S \con (F_1 \cup \dotsc \cup F_{i-1}))|F_i$ has rank at most $t$ and is not $\GF(q)$-representable. We write $F_i(S)$ for $F_i$, and when the value of $t$ is unimportant, we refer simply to a \emph{$(q,h)$-stack}.
	
	 Note that a stack has rank between $2h$ and $th$, and that contracting or restricting to the sets in some initial segment of $F_1, \dotsc, F_h$ yields a smaller stack; we use these facts freely. 
	 
	We now show that the structure of a stack cannot be completely destroyed by a small projection. The following two lemmas are similar; the first does not control rank, and the second does. 
	
	\begin{lemma}\label{stackrobust}
			Let $q$ be a prime power, and $k \ge 0$ be an integer. If $M$ is a matroid, $C \subseteq E(M)$, and $M$ has a $(k(r_M(C)+1),q)$-stack restriction, then $(M \con C)|E(S)$ has a $(k,q)$-stack restriction. 
		\end{lemma}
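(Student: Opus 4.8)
The plan is to use submodularity to bound the number of layers of the stack that contracting $C$ can damage, and then to absorb each damaged layer into a block of consecutive layers that also contains an undamaged one.

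Write $r = r_M(C)$ and let $S$ be the given stack restriction, with parts $F_1, \dots, F_h$ where $h = k(r+1)$; we may assume $k \ge 1$. Put $S_i = F_1 \cup \dots \cup F_i$ for $0 \le i \le h$ (so $S_0 = \emptyset$), and let $S' = (M \con C)|E(S)$. For $1 \le i \le h$ set $\pi_i = \sqcap_{M \con S_{i-1}}(C, F_i)$. The first step is the identity $\pi_i = r_M(C \cup S_{i-1}) + r_M(S_i) - r_M(S_{i-1}) - r_M(C \cup S_i)$, whose sum over $i$ telescopes to $\sum_{i=1}^{h} \pi_i = \sqcap_M(C, S_h) \le r$. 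Call $i$ \emph{clean} if $\pi_i = 0$; since the $\pi_i$ are nonnegative integers, at most $r$ indices fail to be clean, so at least $h - r = k(r+1) - r \ge k$ are clean. The point of cleanliness is that when $i$ is clean, $C$ and $F_i$ are skew in $M \con S_{i-1}$, so $C \cap F_i$ is a set of loops of $M \con S_{i-1}$ and $(M \con (C \cup S_{i-1}))|F_i$ is obtained from the layer $(S \con S_{i-1})|F_i$ just by deleting those loops; in particular it is not $\GF(q)$-representable.

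Next, fix clean indices $g_1 < \dots < g_k$ and partition $\{1, \dots, h\}$ into consecutive intervals $I_1, \dots, I_k$ (listed in increasing order) with $g_j \in I_j$ for each $j$ — for instance $I_1 = \{1, \dots, g_1\}$, $I_j = \{g_{j-1} + 1, \dots, g_j\}$ for $1 < j < k$, and $I_k = \{g_{k-1} + 1, \dots, h\}$. Set $\tilde F_j = \bigcup_{i \in I_j} (F_i \setminus C)$. I claim that $S'$ restricted to $\tilde F_1 \cup \dots \cup \tilde F_k$, with parts $\tilde F_1, \dots, \tilde F_k$, is a $(q, k)$-stack; as it is a restriction of $S'$, this proves the lemma. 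Pairwise disjointness of the $\tilde F_j$, the spanning condition, and the bound $\le ht$ on the rank of each layer are all immediate, so only non-$\GF(q)$-representability of the $j$-th layer needs checking. Writing $a_j = \min I_j$, we have $\tilde F_1 \cup \dots \cup \tilde F_{j-1} = S_{a_j - 1} \setminus C$, so the $j$-th layer is $(M \con (C \cup S_{a_j - 1}))|\tilde F_j$; contracting $\bigcup_{i \in I_j,\, i < g_j}(F_i \setminus C)$ from it and then restricting to $F_{g_j} \setminus C$ exhibits $(M \con (C \cup S_{g_j - 1}))|F_{g_j}$ as a minor, and that matroid is not $\GF(q)$-representable by the previous paragraph. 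Since $\GF(q)$-representability is minor-closed, the $j$-th layer is not $\GF(q)$-representable either.

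I expect the main difficulty to lie in the blocking step, and in seeing why the naive approach fails: because the layers of a stack are defined relative to the order in which parts are contracted, one cannot simply discard the (boundedly many) damaged layers, since removing a part changes every layer below it. Merging a damaged layer with consecutive layers up to the next clean one repairs this, because the clean layer's obstruction reappears as a minor of the merged block; having $k(r_M(C)+1)$ layers to start with is what makes room for $k$ such blocks, each still containing a clean layer. The remaining ingredients — verifying the telescoping identity and the routine minor-chasing in the last paragraph — are straightforward.
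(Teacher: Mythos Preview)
Your proof is correct and takes a genuinely different route from the paper's. The paper argues by induction on $r_M(C)$: it looks at the first block $F = F_1 \cup \dotsb \cup F_k$ of the stack, and either $C$ is skew to $F$ (in which case $(M\con C)|F$ is already a $(k,q)$-stack), or $r_{M\con F}(C) < r_M(C)$ and one applies the inductive hypothesis to the remaining $k\,r_M(C)$ layers in $M\con F$, afterwards absorbing $F$ into the first layer of the stack so obtained. Your argument instead globally identifies the damaged layers via the telescoping identity $\sum_i \sqcap_{M\con S_{i-1}}(C,F_i) = \sqcap_M(C,E(S)) \le r_M(C)$, picks $k$ clean indices, and merges each dirty layer into the block containing the next clean one. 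The underlying idea---that the rank of $C$ bounds the number of layers it can spoil, and that merging consecutive layers repairs the damage because a clean layer survives as a minor of its block---is the same, but your presentation is direct and explicit where the paper's is a short induction. Your version has the mild advantage of making the structure of the resulting stack completely transparent; the paper's version is a couple of lines shorter. One cosmetic point: your mention of ``the bound $\le ht$'' presupposes a $t$ that the lemma statement does not fix, but since the conclusion only asks for a $(k,q)$-stack this is harmless.
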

		\begin{proof}
			Let $S$ be a $(k(r_M(C)+1),q)$-stack in $M$, with $F_i = F_i(S)$ for each $i$. By adding parallel extensions if needed, we may assume that $C \cap E(S)= \varnothing$. If $r_M(C) = 0$ then the result is trivial; suppose that $r_M(C) > 0$ and that the lemma holds for sets $C$ of smaller rank. Let $F = F_1 \cup \dotsc \cup F_k$.  If $C$ is skew to $F$ in $M$, then $(M \con C)|F$ is a $(k,q)$-stack, giving the lemma. Otherwise $M \con F$ has a $(k r_M(C),q)$-stack restriction, and $r_M(C) > r_{M \con F}(C)$. By the inductive hypothesis, $M \con (F \cup C)$ has a $(k,q)$-stack restriction $S'$; therefore $F \cup F_1(S'), F_2(S'), \dotsc, F_k(S')$ give a $(k,q)$-stack restriction of $M \con C$. 
		\end{proof}

	\begin{lemma}\label{skewstack}
		Let $q$ be a prime power, and $a,h$ and $t$ be integers with $a \ge 0$, $h \ge 1$ and $t \ge 2$. If $M$ is a matroid with an $((a+1)h,q,t)$-stack restriction $S$, and $X \subseteq E(M)$ is a set satisfying $\sqcap_M(X,E(S)) \le a$, then there exists $C \subseteq E(S)$ so that $(M \con C)|E(S)$ has an $(h,q,t)$-stack restriction $S'$, and $X$ and $E(S')$ are skew in $M \con C$.  
	\end{lemma}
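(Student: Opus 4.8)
The plan is to pick out $h$ \emph{consecutive} layers of $S$ across which $X$ gains no local connectivity, and to take $C$ to be the union of the layers lying below that block. Write $N=(a+1)h$ and $F_i=F_i(S)$, and for $0\le i\le N$ set $G_i=F_1\cup\dotsb\cup F_i$ (so $G_0=\varnothing$) and $p_i=\sqcap_M(X,G_i)$. A routine rank computation gives $p_i-p_{i-1}=\sqcap_{M\con G_{i-1}}(X,F_i)\ge 0$, so $p_0\le p_1\le\dotsb\le p_N$; and since $F_1\cup\dotsb\cup F_N$ spans $S$ we have $p_N=\sqcap_M(X,E(S))\le a$. As the $p_i$ are nonnegative integers with $\sum_{i=1}^N(p_i-p_{i-1})=p_N\le a$, at most $a$ indices $i$ satisfy $p_i>p_{i-1}$; call the remaining indices \emph{flat}. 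Deleting the $\le a$ non-flat indices from $\{1,\dotsc,N\}$ leaves at most $a+1$ runs of consecutive indices totalling at least $N-a=(a+1)(h-1)+1$ indices, so some run has length at least $h$. I would then fix $k$ with $0\le k\le N-h$ such that $k+1,\dotsc,k+h$ are all flat, so that $p_k=p_{k+1}=\dotsb=p_{k+h}$.

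Next I would take $C=G_k\subseteq E(S)$ and $S'=(M\con C)|(F_{k+1}\cup\dotsb\cup F_{k+h})$. Since contracting the sets of an initial segment of the layers of a stack, and afterwards restricting to an initial segment of the layers that remain, again yields a stack with the same parameter $t$, the matroid $(S\con G_k)|(F_{k+1}\cup\dotsb\cup F_{k+h})$ is an $(h,q,t)$-stack with layers $F_{k+1},\dotsc,F_{k+h}$; because $G_k$ and all the $F_i$ lie in $E(S)$, this matroid is exactly $S'$, which is therefore an $(h,q,t)$-stack restriction of $(M\con C)|E(S)$, as required.

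It remains to check that $X$ and $E(S')=F_{k+1}\cup\dotsb\cup F_{k+h}$ are skew in $M\con C$. Expanding the ranks of $M\con G_k$ in terms of $r_M$ (and using $G_k\cup F_{k+1}\cup\dotsb\cup F_{k+h}=G_{k+h}$) gives $\sqcap_{M\con G_k}(X,E(S'))=r_M(X\cup G_k)+r_M(G_{k+h})-r_M(G_k)-r_M(X\cup G_{k+h})$, and the relation $p_{k+h}=p_k$ rearranges to exactly the vanishing of this expression; equivalently $\sqcap_{M\con C}(X,E(S'))=\sqcap_M(X,G_{k+h})-\sqcap_M(X,G_k)=p_{k+h}-p_k=0$.

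I do not expect a genuinely hard step here. The point to get right is that $C$ must be an \emph{initial} union of layers of $S$ — this is precisely what keeps the leftover a stack with the same bound $t$ — rather than a low-rank set chosen to ``absorb'' $\sqcap_M(X,E(S))$ directly, which need not exist (for instance when $X$ is a single point and $E(S)$ spans a $U_{2,4}$). Granting that, the only quantitative content is the bookkeeping that $(a+1)h$ layers suffice to leave $h$ consecutive flat layers after deleting at most $a$ of them, and then skewness is automatic from the chosen run being flat.
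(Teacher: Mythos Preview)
Your argument is correct and rests on the same observation as the paper's: the sequence $\sqcap_M(X,G_i)$ is nondecreasing and bounded by $a$, so among $(a+1)h$ layers there must be $h$ consecutive ones across which no increase occurs; contracting the preceding layers leaves an $(h,q,t)$-stack skew to $X$. The paper organises this as an induction on $a$ in blocks of $h$ (try $F_1\cup\dotsb\cup F_h$; if not skew, contract it and recurse on the remaining $(ah,q,t)$-stack with local connectivity now at most $a-1$), whereas you do a direct pigeonhole over all windows of length $h$ --- a harmless difference in presentation rather than substance.
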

	\begin{proof}
		Let $F = F_1(S) \cup \dotsc \cup F_h(S)$. If $F$ is skew to $X$ in $M$, then $F$ contains an $(h,q,t)$-stack $S'$ satisfying the lemma with $C = \varnothing$. Otherwise, $M \con F$ has an $(ah,q,t)$-stack restriction $S_0$ contained in $E(S)$, and $\sqcap_{M \con F}(X-F,E(S_0)) < \sqcap_{M}(X-F,E(S)) \le a$; the lemma follows routinely by induction on $a$.   
	\end{proof}
	
	This low local connectivity is obtained via the following lemma, which applies more generally. We will just use the case when $M|Y$ is a stack. 

	\begin{lemma}\label{reduceconn}
		If $M \in \fU(a,b)$ and $Y \subseteq E(M)$, then there is a set $X \subseteq E(M)$ so that $\tau_a(M|X) \ge \binom{b-1}{a}^{a-r_M(Y)}\tau_a(M)$ and $\sqcap_M(X,Y) \le a$. 
	\end{lemma}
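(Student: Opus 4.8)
The idea is to build $X$ greedily by a deletion process, trying to keep the $a$-covering number large while driving down the local connectivity with $Y$. Start with $X_0 = E(M)$, which has $\sqcap_M(X_0, Y) = r_M(Y)$. If $\sqcap_M(X_i, Y) \le a$ we are done, so suppose $\sqcap_M(X_i, Y) > a$; I want to pass to a restriction $X_{i+1} \subsetneq X_i$ with $\sqcap_M(X_{i+1}, Y) < \sqcap_M(X_i, Y)$ at the cost of only a factor $\binom{b-1}{a}^{-1}$ in the $a$-covering number. Since $\sqcap_M(X_i,Y)$ drops by at least one each step and starts at $r_M(Y)$, after at most $r_M(Y) - a$ steps we reach local connectivity at most $a$, and the covering number has been divided by at most $\binom{b-1}{a}^{r_M(Y)-a}$, which gives exactly the stated bound $\tau_a(M|X) \ge \binom{b-1}{a}^{a - r_M(Y)}\tau_a(M)$.

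So the crux is the single reduction step: given $X$ with $\sqcap_M(X,Y) = k > a$, find $X' \subseteq X$ with $\sqcap_M(X',Y) < k$ and $\tau_a(M|X') \ge \binom{b-1}{a}^{-1}\tau_a(M|X)$. The natural move is to work inside $\cl_M(Y) \cap X$, or more precisely to contract something in $Y$ to force part of $X$ into the contracted span. Here I would use Corollary~\ref{kdensitycon}-type reasoning in reverse: pick a rank-$a$ cover of $M|X$ realizing $\tau_a(M|X)$; by averaging, some rank-$a$ set $R$ in this cover has the property that restricting to the union of cover-sets "near" $R$ retains a $\binom{b-1}{a}^{-1}$ fraction of the sets — but that alone does not lower local connectivity. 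The honest approach: since $\sqcap_M(X,Y) \ge a+1$, the matroid $M|(X \cup Y)$ contains, inside $\cl_M(Y)$, a rank-$(a+1)$ subset of $\cl_M(X)$; contracting a carefully chosen independent set of size one from $\cl_M(Y) \cap \cl_M(X)$ reduces $\sqcap(\cdot, Y)$ by one. We must then argue the covering number of the restriction of the \emph{contraction} is still large — but the lemma is phrased in terms of a restriction of $M$, not a contraction, so instead I would cover $M|X$ by the sets $\{R \cap X : R \text{ a cover set}\}$ grouped by which rank-$a$ flat of $M \con e$ they fall into, and choose $X'$ to be the union of one such group together with the extra point; then $M|X'$ has smaller connectivity with $Y$ and, by pigeonhole over the $\le \binom{b-1}{a}$ classes (using Theorem~\ref{kdensity} applied to $(M \con e)|X$ restricted to rank $a+1$), retains a $\binom{b-1}{a}^{-1}$ fraction of the cover.

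The step I expect to be the main obstacle is making this pigeonhole clean: I need that the rank-$a$ sets covering $M|X$, after contracting a single well-chosen element of $\cl_M(Y)$, are partitioned (up to the covering factor $\binom{b-1}{a}$ coming from the $r(M)=a+1$ base case of Theorem~\ref{kdensity}) into few classes according to a rank-$(a+1)$ flat, so that one class both spans most of the covering number and lies in lower local connectivity with $Y$. Getting the element of $\cl_M(Y)$ to simultaneously (i) drop $\sqcap(X,Y)$ and (ii) be compatible with the uniform-minor-free cover structure is the delicate point; the $U_{a+1,b}$-minor-freeness is exactly what lets the base case of Theorem~\ref{kdensity} turn "rank at most $a+1$" into "$\binom{b-1}{a}$ rank-$a$ sets," so the bookkeeping has to route through that. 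Everything else — the induction on $r_M(Y)$, the accumulation of the constant, the reduction to the case $r_M(Y) > a$ (when $r_M(Y) \le a$ we may simply take $X = E(M)$ since then $\sqcap_M(E(M),Y) \le r_M(Y) \le a$ already) — is routine.
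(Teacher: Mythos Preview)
Your iterative plan has a genuine gap at the single reduction step, and the paper's argument avoids the issue by taking a completely different route.

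The paper does not iterate at all. It fixes a basis $B$ of $M$ containing a basis $B_Y$ of $M|Y$, and contracts $C = B - B_Y$ in one stroke. Since $C$ is skew to $Y$, the matroid $M/C$ has rank exactly $r_M(Y)$, so Theorem~\ref{kdensity} gives a rank-$a$ cover of $M/C$ of size at most $\binom{b-1}{a}^{r_M(Y)-a}$. A single majority argument over this cover produces a set $X$ with $\tau_a(M|X) \ge \binom{b-1}{a}^{a-r_M(Y)}\tau_a(M)$. Because $C$ is skew to $Y$, local connectivity with $Y$ can only go up under contracting $C$, so $\sqcap_M(X,Y) \le \sqcap_{M/C}(X,Y) \le r_{M/C}(X) \le a$. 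That is the whole proof: one contraction, one application of Theorem~\ref{kdensity}, one pigeonhole.

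Your approach, by contrast, tries to drop $\sqcap_M(X,Y)$ one unit at a time by choosing an element $e \in \cl_M(X) \cap \cl_M(Y)$ and then regrouping a rank-$a$ cover. Two things go wrong. First, positive local connectivity does not guarantee that $\cl_M(X) \cap \cl_M(Y)$ contains a nonloop of $M$; two flats can meet in the lattice of flats without any element of $E(M)$ witnessing the intersection, so your ``carefully chosen $e$'' may simply fail to exist. Second, even granting such an $e$, your grouping argument is not well-defined: the rank-$a$ sets in a minimum cover of $M|X$ still have rank at most $a$ in $M/e$, so there is no canonical rank-$(a{+}1)$ flat for them to ``fall into'', and you give no reason why the union of any one group would have strictly smaller local connectivity with $Y$ in $M$ (as opposed to in $M/e$). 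The base case of Theorem~\ref{kdensity} you invoke bounds the number of rank-$a$ flats needed to cover a rank-$(a{+}1)$ set, but nothing in your step produces a rank-$(a{+}1)$ set to which that bound applies.

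The fix is precisely the paper's idea: instead of peeling off elements on the $Y$ side, contract a maximal set \emph{skew} to $Y$ so that the whole matroid collapses to rank $r_M(Y)$; then Theorem~\ref{kdensity} applies globally and one pigeonhole finishes. Your accounting of the constant $\binom{b-1}{a}^{r_M(Y)-a}$ and the trivial case $r_M(Y) \le a$ are correct, but the engine of the argument needs to be replaced.
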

	\begin{proof}
		We may assume that $r_M(Y) > a$. Let $B$ be a basis for $M$ containing a basis $B_Y$ for $M|Y$. We have $r(M \dcon (B-B_Y)) = r_M(Y)$, so $\tau_a(M \dcon (B-B_Y)) \le \binom{b-1}{a}^{r_M(Y)-a}$ by Theorem~\ref{kdensity}. Applying a majority argument to a smallest cover of $M \dcon (B-B_Y)$ with sets of rank at most $a$ gives a set $X' \subseteq E(M)$ so that $r_{M \dcon (B-B_Y)}(X) \le a$, and $\tau_a(M|X) \ge \binom{b-1}{a}^{a-r_M(Y)}\tau_a(M)$. Moreover, $B-B_Y$ is skew to $Y$ in $M$, so $\sqcap_M(X,Y) \le \sqcap_{M \dcon (B-B_Y)}(X,Y) \le a$.
	\end{proof}	

	\section{Thickness and Weighted Covers}
	
	The next section requires a modified notion of covering number in which elements of a cover are weighted by rank. All results in the current section are also proved in [\ref{part1}]. 
	
	  A \emph{cover} of a matroid $M$ is a collection of sets with union $E(M)$, and for an integer $d \ge 1$, we say the \textit{$d$-weight} of a cover $\cF$ of $M$ is the sum $\sum_{F \in \cF} d^{r_M(F)}$, and write $\wt^d_M(\cF)$ for this sum. Thus, a rank-$1$ set has weight $d$, a rank-$2$ set has rank $d^2$, etc. We write $\tau^d(M)$ for the minimum $d$-weight of a cover of $M$, and we say a cover of $M$ is \emph{$d$-minimal} if it has $d$-weight equal to $\tau^d(M)$. 
	
	Since $r_M(X) \le r_{M \con e}(X - \{e\}) +1$ for all $X \subseteq E(M)$, we have $\tau^d(M) \le d\tau^d(M \con e)$ for every nonloop $e$ of $M$; a simple induction argument gives the following lemma:
	
	\begin{lemma}\label{weightedcontraction}
		If $d$ is a positive integer and $M$ is a matroid, then $\tau^d(M \con C) \ge d^{-r_M(C)} \tau^d(M)$ for all $C \subseteq E(M)$.
	\end{lemma}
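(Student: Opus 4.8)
The plan is to bootstrap from the single-element inequality $\tau^d(M) \le d\,\tau^d(M \con e)$ flagged just before the statement, and then induct on $|C|$. First I would pin that single-element bound down precisely: given a $d$-minimal cover $\cF$ of $M \con e$, the collection $\{F \cup \{e\} : F \in \cF\}$ covers $E(M)$, and since $r_M(F \cup \{e\}) \le r_{M \con e}(F) + 1$ for every $F \subseteq E(M \con e)$, this collection has $d$-weight at most $d \cdot \wt^d_{M \con e}(\cF) = d\,\tau^d(M \con e)$. Hence $\tau^d(M) \le d\,\tau^d(M \con e)$ whenever $e$ is a nonloop of $M$, equivalently $\tau^d(M \con e) \ge d^{-1}\tau^d(M) = d^{-r_M(\{e\})}\tau^d(M)$.

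Next I would dispose of the loop case, since that is the only place the exponent $r_M(C)$ needs any care. If $e$ is a loop of $M$, then $M \con e = M \del e$, and appending $e$ to any member of a cover of $M \del e$ leaves that member's rank unchanged; so $\tau^d(M) \le \tau^d(M \con e)$, which, because $r_M(\{e\}) = 0$, is exactly $\tau^d(M \con e) \ge d^{-r_M(\{e\})}\tau^d(M)$.

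For the inductive step I would take $|C| \ge 1$ and assume the lemma for all matroids and all contraction sets of size $< |C|$. Fix any $e \in C$ and put $C' = C - \{e\}$, so $M \con C = (M \con e) \con C'$. The inductive hypothesis applied to $M \con e$ and $C'$ gives $\tau^d(M \con C) \ge d^{-r_{M \con e}(C')}\,\tau^d(M \con e)$, and the contraction-rank identity $r_{M \con e}(C') = r_M(C) - r_M(\{e\})$ combines this with whichever single-element bound above is appropriate — the nonloop one if $r_M(\{e\}) = 1$, the loop one if $r_M(\{e\}) = 0$ — to yield $\tau^d(M \con C) \ge d^{-(r_M(C) - r_M(\{e\}))} \cdot d^{-r_M(\{e\})}\,\tau^d(M) = d^{-r_M(C)}\,\tau^d(M)$ in either case.

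I do not expect a genuine obstacle here: this is the direct weighted analogue of the unweighted inductive bound behind Corollary~\ref{kdensitycon}, and the only thing to watch is that a loop contributes $0$ to $r_M(C)$ — handled by the separate loop case, so that the induction never spends a spurious factor of $d$.
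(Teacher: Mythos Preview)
Your proposal is correct and follows exactly the approach the paper indicates: the paper itself does not give a written proof, only noting that $\tau^d(M)\le d\,\tau^d(M\con e)$ for every nonloop $e$ and that ``a simple induction argument'' yields the lemma. Your write-up is precisely that induction, with the loop case handled so the exponent is $r_M(C)$ rather than $|C|$.
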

	
	We say a matroid $M$ is \textit{$d$-thick} if $\tau_{r(M)-1}(M) \ge d$, and a set $X \subseteq E(M)$ is \emph{$d$-thick in $M$} if $M|X$ is $d$-thick. Note that any $d$-thick matroid of rank $2$ has a $U_{2,d}$-restriction. Moreover, it is clear that $\tau_{r(M)-1}(M) \le \tau_{r(M)-2}(M \con e)$ for any nonloop $e$ of $M$, so it follows that $d$-thickness is preserved by contraction. Thus, any $d$-thick matroid of rank at least $2$ has a $U_{2,d}$-minor, and the rank-$(a+1)$ case of Theorem~\ref{kdensity} yields the following:
	
	\begin{lemma}\label{thickminor}
		Let $a,b,d$ be integers with $1 \le a < b$ and $d > \binom{b-1}{a}$. If $M$ is a $d$-thick matroid of rank greater than $a$, then $M$ has a $U_{a+1,b}$-minor. 
	\end{lemma}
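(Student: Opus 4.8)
The plan is to deduce the lemma from the rank-$(a+1)$ case of Theorem~\ref{kdensity} by contracting down to rank $a+1$, using the fact — already noted in the remark preceding the statement — that $d$-thickness is preserved under contraction of a nonloop.

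First I would note that $r(M) > a \ge 1$, so $r(M) \ge a+1 \ge 2$ and $M$ has a nonloop; moreover contracting a nonloop decreases the rank by exactly one. Starting from $M$ and repeatedly contracting nonloops, I obtain a minor $N$ with $r(N) = a+1$ (taking $N = M$ if $r(M) = a+1$ already). Throughout this process the rank stays at least $a+1 \ge 2$, so the inequality $\tau_{r(M')-1}(M') \le \tau_{r(M')-2}(M'\con e)$ applies at each step, and since $r(M'\con e) = r(M')-1$ this says precisely that each contraction preserves $d$-thickness. Hence $N$ is $d$-thick, i.e. $\tau_a(N) = \tau_{r(N)-1}(N) \ge d$.

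Now $d > \binom{b-1}{a}$ gives $\tau_a(N) > \binom{b-1}{a}$, so the contrapositive of the rank-$(a+1)$ case of Theorem~\ref{kdensity} — which asserts $\tau_a(M') \le \binom{b-1}{a}$ for every $M' \in \cU(a,b)$ with $r(M') = a+1$ — forces $N \notin \cU(a,b)$; that is, $N$, and therefore $M$, has a $U_{a+1,b}$-minor. The argument is essentially routine: the only point needing a moment's care is to confirm that the chain of contractions never lowers the rank below $a+1$ (so that ``$d$-thick'' stays meaningful and the relevant inequality is applicable) and that $M$ genuinely has rank at least $a+1$ so that such an $N$ exists — both of which are immediate from $r(M) > a$ and $a \ge 1$.
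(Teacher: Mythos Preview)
Your proof is correct and follows exactly the route the paper sketches in the sentence immediately preceding the lemma: use that $d$-thickness is preserved under contracting nonloops to pass to a rank-$(a+1)$ minor $N$, and then apply the contrapositive of the rank-$(a+1)$ case of Theorem~\ref{kdensity} to force $N \notin \cU(a,b)$. There is no substantive difference between your argument and the paper's.
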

	
	This controls the nature of a $d$-minimal cover of $M$ in several ways:
	
	\begin{lemma}\label{weightedcover}
		Let $a,b,d$ be integers with $1 \le a < b$ and $d > \binom{b-1}{a}$. If $\cF$ is a $d$-minimal cover of a matroid $M \in \cU(a,b)$, then
		\begin{enumerate}
		\item\label{wc1} every $F \in \cF$ is $d$-thick in $M$, §
		\item\label{wc2} every $F \in \cF$ has rank at most $a$, and 
		\item\label{wc3} $\tau_a(M) \le \tau^d(M) \le d^a \tau_a(M)$. 
		\end{enumerate}
	\end{lemma}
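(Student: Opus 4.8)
The plan is to prove the three parts in the order stated: part~(\ref{wc1}) is the substantive one, proved by a weight‑reducing exchange argument on the $d$-minimal cover $\cF$, and parts~(\ref{wc2}) and~(\ref{wc3}) then follow quickly (the former also invoking Lemma~\ref{thickminor}).

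For part~(\ref{wc1}) I would argue by contradiction. Suppose some $F \in \cF$ is not $d$-thick in $M$. We may assume $r_M(F) \ge 1$, since a nonempty rank-$0$ set is $d$-thick (it admits no cover by sets of rank $\le -1$) and $\varnothing$ never occurs in a $d$-minimal cover. Because rank in $M|F$ agrees with rank in $M$ on subsets of $F$, failure of $d$-thickness supplies sets $G_1, \dotsc, G_k \subseteq F$ with union $F$, with $k \le d-1$, and with $r_M(G_j) \le r_M(F) - 1$ for each $j$. Then $\cF' = (\cF \setminus \{F\}) \cup \{G_1, \dotsc, G_k\}$ is again a cover of $M$, and
\[
\wt^d_M(\cF') \;\le\; \wt^d_M(\cF) - d^{r_M(F)} + k\,d^{r_M(F)-1} \;\le\; \wt^d_M(\cF) - d^{r_M(F)-1} \;<\; \wt^d_M(\cF),
\]
using $k \le d-1$ and $r_M(F) \ge 1$; this contradicts $d$-minimality of $\cF$.

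For part~(\ref{wc2}), if some $F \in \cF$ had $r_M(F) > a$, then by part~(\ref{wc1}) the matroid $M|F$ would be $d$-thick of rank greater than $a$, so Lemma~\ref{thickminor} (applicable since $d > \binom{b-1}{a}$) would yield a $U_{a+1,b}$-minor of $M|F$, hence of $M$, contradicting $M \in \cU(a,b)$. For part~(\ref{wc3}): the bound $\tau^d(M) \le d^a\tau_a(M)$ needs nothing more than bounding the $d$-weight of a minimum cover of $M$ by rank-$(\le a)$ sets by $d^a\tau_a(M)$; and for $\tau_a(M) \le \tau^d(M)$, part~(\ref{wc2}) shows $\cF$ is itself a cover of $M$ by sets of rank at most $a$, so $\tau_a(M) \le |\cF| \le \sum_{F \in \cF} d^{r_M(F)} = \tau^d(M)$, each summand being at least $1$.

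I expect no real obstacle: the argument is self-contained given Lemma~\ref{thickminor}. The only point needing a moment's care is verifying that the weight strictly drops in part~(\ref{wc1}) — which is where $k\,d^{r_M(F)-1} \le (d-1)d^{r_M(F)-1} = d^{r_M(F)} - d^{r_M(F)-1}$ enters — together with the harmless bookkeeping for loops/rank-$0$ sets.
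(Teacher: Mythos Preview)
Your proof is correct and follows essentially the same approach as the paper: the same weight-reducing exchange for part~(\ref{wc1}), the same appeal to Lemma~\ref{thickminor} for part~(\ref{wc2}), and the same two-line argument for part~(\ref{wc3}). Your extra bookkeeping about rank-$0$ sets and the empty set is harmless but not needed, since a nonempty rank-$0$ set is automatically $d$-thick and the exchange argument only applies when $F$ fails to be $d$-thick.
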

	\begin{proof}
		If some set $F \in \cF$ is not $d$-thick, then $F$ is the union of sets $F_1, \dotsc, F_{d-1}$ of smaller rank. Thus, $(\cF - \{F\}) \cup \{F_1, \dotsc, F_{d-1}\}$ is a cover of $M$ of weight at most $\wt_M^d(\cF) - d^{r_M(F)} + (d-1)d^{r_M(F)-1} < \wt^M_d(\cF)$, contradicting $d$-minimality of $\cF$. Therefore, every set in $F$ is $d$-thick in $M$, giving (\ref{wc1}). (\ref{wc2}) now follows from Lemma~\ref{thickminor}. 
		
		To see the upper bound in (\ref{wc3}), observe that any smallest cover of $M$ with sets of rank at most $a$ has size $\tau_a(M)$ and $d$-weight at most $d^a\tau_a(M)$. The lower bound follows from the fact that every set has $d$-weight at least $1$, and $\cF$, by (\ref{wc2}), is a $d$-minimal cover of $M$ containing sets of rank at most $a$. 
	\end{proof}

	\section{Stacking Up}
	
	Our first lemma finds, in a dense matroid, a dense minor with a large stack restriction. We consider the modified notion of density $\tau^d$. 
	
	\begin{lemma}\label{getstack}
		There is an integer-valued function $\alpha_{\ref{getstack}}(a,b,h,q,\lambda)$ so that, for any prime power $q$ and integers $a,b,h,\lambda$ with $1 \le a < b$, $m \ge 0$, and $\lambda \ge 1$, if $d > \max(q+1, \binom{b-1}{a})$ is an integer and $M \in \cU(a,b)$ satisfies $\tau^d(M) \ge \alpha_{\ref{getstack}}(a,b,h,q,\lambda)q^{r(M)}$, then $M$ has a contraction-minor $N$ with an $(h,q,a+1)$-stack restriction, satisfying $\tau^d(N) \ge \lambda q^{r(N)}$. 
	\end{lemma}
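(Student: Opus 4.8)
The plan is to induct on $h$. For $h=0$ the empty stack and $N=M$ suffice, so one may take $\alpha_{\ref{getstack}}(a,b,0,q,\lambda)=\lambda$. Assume $h\ge 1$ and that the result holds for $h-1$ with value $\alpha'=\alpha_{\ref{getstack}}(a,b,h-1,q,\lambda')$, where $\lambda'$ is a much larger density parameter fixed at the end; the slack is needed because the stack-robustness lemmas trade stack height for room to contract.

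The main step peels off a single layer. Given $M'\in\cU(a,b)$ with $\tau^d(M')$ large relative to $q^{r(M')}$, I would first find a set $F\subseteq E(M')$ with $r_{M'}(F)\le a+1$ and $M'|F$ not $\GF(q)$-representable. Take a $d$-minimal cover $\cF$ of $M'$; by Lemma~\ref{weightedcover}, since $d>\binom{b-1}{a}$, every member of $\cF$ is $d$-thick of rank at most $a$. If some member has rank at least $2$, then, being $d$-thick of rank at least $2$, it has a $U_{2,d}$-minor, and since $d\ge q+2$ this is a non-$\GF(q)$-representable matroid of rank $2\le a+1$; contracting suitably inside that member gives the desired $F$ in a contraction-minor of $M'$. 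Otherwise every member of $\cF$ has rank $1$, so $\tau^d(M')=d\,\elem(M')$ and $\elem(M')$ is large; then a density argument (Kung's theorem, or Theorem~\ref{halfwaypoint} applied to a suitable minor) yields either a line with more than $q+1$ points or a $\PG(n'-1,q')$-minor with $q'>q$, in either case supplying $F$ inside a still-dense contraction-minor. Contracting $F$ and applying the inductive hypothesis to the resulting matroid then produces the remaining $h-1$ layers.

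To close the iteration, each new layer must be kept skew to the layers already built — so that later contractions cannot undo its non-representability — while the density is preserved. For this I would use Lemma~\ref{reduceconn} with $Y$ the bounded-rank union of the current layer and the previously built ones to pass to a restriction $X$ with $\sqcap(X,Y)\le a$ and with $\tau_a$ a controlled fraction of what it was, then Lemma~\ref{skewstack} to upgrade $\sqcap(X,Y)\le a$ to genuine skewness at the cost of a small contraction and a bounded loss of stack height; Lemma~\ref{stackrobust} then absorbs all the auxiliary contractions accumulated over the $h$ steps, whose total rank is bounded in terms of $a,b,h$. Conversions between $\tau_a$ and $\tau^d$ are handled by Lemma~\ref{weightedcover}(\ref{wc3}), and density is carried through contractions using Lemma~\ref{weightedcontraction} and Corollary~\ref{kdensitycon}; unwinding the recursion determines $\alpha_{\ref{getstack}}(a,b,h,q,\lambda)$ explicitly, as a rapidly growing function of the parameters.

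I expect the density bookkeeping to be the main obstacle. Lemma~\ref{weightedcontraction} only guarantees that contracting a rank-$\rho$ set costs a factor $d^{\rho}$ in the $\tau^d$-density, and the final stack has rank of order $(a+1)h$, so a careless accounting would lose a factor of order $d^{(a+1)h}$ — more than a $d$-independent $\alpha_{\ref{getstack}}$ can absorb. The resolution is to be disciplined about the order of operations: find the layer, \emph{separate} it from the dense part of the matroid via Lemmas~\ref{reduceconn} and~\ref{skewstack}, and only then contract, so that every contraction actually performed is essentially skew to the dense core and its cost is governed by the fixed constant $\binom{b-1}{a}$ (through Corollary~\ref{kdensitycon}) rather than by $d$. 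Choosing the intermediate density inflation correctly and verifying that it survives this process is the heart of the argument.
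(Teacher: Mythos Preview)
Your inductive skeleton is right, and using $d$-minimal covers to locate a rank-$(\le a{+}1)$ non-$\GF(q)$-representable set $X$ matches the paper. But the paper's execution is much simpler, resting on two observations you miss. First, the paper takes $M$ \emph{contraction-minimal} with $\tau^d(M)\ge\alpha q^{r(M)}$; then in your rank-$1$ case one has $\elem(M)\ge d^{-1}\alpha q^{r(M)}$ while $\elem(M/e)<d^{-1}\alpha q^{r(M)-1}$ by minimality, and pigeonhole on the lines through $e$ gives a $(q{+}2)$-point line directly---no appeal to Kung or to Theorem~\ref{halfwaypoint} is needed, and your sketch is vague exactly here. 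Second, Lemmas~\ref{reduceconn}, \ref{skewstack} and~\ref{stackrobust} are not needed at all. Having found $X$, the paper contracts it and applies induction to $M/X$, obtaining an $(h{-}1)$-stack $S'$ in some $(M/X)/C$; replacing $C$ by a basis for it in $M/X$ makes $C$ skew to $X$ in $M$, so $N=M/C$ satisfies $N|X=M|X$ and $N/X$ contains $S'$. Then $X$ together with the parts of $S'$ is an $(h,q,a{+}1)$-stack in $N$ with $X$ as the \emph{bottom} layer. Your concern that ``later contractions'' might ``undo its non-representability'' dissolves once you build in this direction: the later layers live in $N/X$ by construction, so they cannot affect $N|X$.

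Your worry about $d$-dependence is legitimate but is not resolved by your plan, and the paper does not try to resolve it either. The paper's recursion is $\alpha(h,\lambda)=d^{a+1}\alpha(h{-}1,\lambda q^{a+1})$, so its $\alpha$ does depend on $d$; the lemma statement is slightly imprecise on this point, but harmlessly so, since in its only application (Theorem~\ref{shortcutmain}) one fixes $d=\max(q,\binom{b-1}{a})+2$ as a function of $a,b,q$. Routing through Corollary~\ref{kdensitycon} and converting between $\tau_a$ and $\tau^d$ via Lemma~\ref{weightedcover}(\ref{wc3}) as you propose would still introduce factors of $d^a$ at each conversion, so it does not buy $d$-independence anyway.
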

	\begin{proof}
		Let $a,b,q$ and $d$ be integers such that $1 \le a < b$, $q \ge 2$ and $d > \max(q+1,\binom{b-1}{a})$. Set $\alpha_{\ref{getstack}}(a,b,0,q,\lambda) = \lambda$, and for each $h > 0$ recursively set $\alpha_{\ref{getstack}}(a,b,h,q,\lambda) = d^{a+1}\alpha_{\ref{getstack}}(a,b,m-1,q,\lambda q^{a+1})$. Note that all values this function takes for $h > 0$ are multiples of $d$. 
		
		When $h = 0$, the lemma holds with $N = M$. Let $h > 0$ be an integer, and suppose inductively that $\alpha_{\ref{getstack}}$ as defined satisfies the lemma for smaller values of $h$. Let $M \in \cU(a,b)$ be contraction-minimal satisfying $\tau^d(M) \ge \alpha q^{r(M)}$; we show that $M$ has the required minor $N$. 
		
		\begin{claim}
			There is a set $X \subseteq E(M)$ such that $r_M(X) \le a+1$ and $M|X$ is not $\GF(q)$-representable.
		\end{claim}
		\begin{proof}[Proof of claim:]
			Let $e$ be a nonloop of $M$ and let $\cF$ and $\cF'$ be $d$-minimal covers of $M$ and $M \con e$ respectively. We consider two cases:
			
			\emph{Case 1:} $r_M(F) = 1$ for all $F \in \cF$ and $r_{M \con e}(F) = 1$ for all $ F \in \cF'$. 
			
			Note that $\tau^d(M) = d|\cF|$ and $\tau^d(M \con e) = d|\cF'|.$ By minimality of $M$, this gives $|\cF| \ge d^{-1} \alpha q^{r(M)}$ and $|\cF'| < d^{-1}\alpha q^{r(M)-1}$, so $|\cF'| \le d^{-1}\alpha q^{r(M)-1} - 1$, as this expression is an integer. Moreover, $|\cF| = \elem(M)$ and $|\cF'| = \elem(M \con e)$, so $\elem(M) \ge d^{-1}\alpha q^{r(M)} \ge q \elem(M \con e) + q > q \elem(M \con e) + 1$. Since the points of $M \con e$ correspond to lines of $M$ through $e$, it follows by a majority argument that some line $L$ through $e$ contains at least $q+1$ other points of $M$, and therefore that $X = L$ will satisfy the lemma. 
			
			\emph{Case 2:} $r_N(F) \ge 2$ for some $F \in \cF$ or $r_{M \con e}(F) \ge 2$ for some $F \in \cF'$.
			
			If $X \in \cF$ satisfies $r_M(X) \ge 2$, then by Lemma~\ref{weightedcover}, $X$ is $d$-thick in $M$ and has rank at most $a$. Since $d \ge q+2$ and thickness is preserved by contraction, the matroid $M|X$ has a $U_{2,q+2}$-minor and therefore $X$ satisfies the claim. If $X \in \cF'$ satisfies $r_{M \con e}(X) \ge 2$, then $r_M(X \cup \{e\}) \le a+1$ and $X \cup \{e\}$ will satisfy the claim for similar reasons.		

		\end{proof}
		
		Now $\tau^d(M \con X) \ge d^{-(a+1)}\tau^d(M) \ge d^{-(a+1)} \alpha q^{r(M \con X)} \ge \alpha_{\ref{getstack}}(a,b,h-1,q,\lambda q^{a+1}) q^{r(M \con X)},$ so $M \con X$ has a contraction-minor $M \con (X \cup C)$ with an $(h-1,q,a+1)$-stack restriction $S'$, satisfying $\tau^d(M') \ge \lambda q^{a+1} q^{r(M')}$. We may assume that $C$ is independent in $M \con X$; let $N = M \con C$. We have $N|X = M|X$ and $N \con X$ has an $(h-1,q,a+1)$-stack restriction, so $N$ has an $(h,q,a+1)$-stack restriction. Morever $\tau^d(N) \ge \tau^d(N \con X) \ge \lambda q^{a+1} q^{r(N \con X)} = \lambda q^{a+1-r_N(X)} q^{r(N)}$. Since $r_N(X) \le a+1$, the matroid $N$ is the required minor. 
		
	\end{proof}

	\section{Exploiting a Stack}
	
	We defined a stack as an example of a matroid that is `far' from being $\GF(q)$-representable. In this section we make this concrete by proving that a stack on top of a projective geometry yields a large uniform minor or a large projective geometry over a larger field. 
	
	We first need an easily proved lemma from [\ref{dhj}], telling us that a small projection of a projective geometry does not contain a large stack: 
	
	\begin{lemma}\label{stackinprojection}
		Let $q$ be a prime power and $h$ be a nonnegative integer. If $M$ is a matroid and $X \subseteq E(M)$ satisfies $r_M(X) \le h$ and $\si(M \del X) \cong \PG(r(M)-1,q)$, then $M \con X$ has no $(q,h+1)$-stack restriction. 
	\end{lemma}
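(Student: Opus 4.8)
The plan is to argue by induction on $h$. For the base case $h = 0$, the hypothesis $r_M(X) \le 0$ forces $X$ to consist of loops, so $M \con X = M \del X$, and hence $\si(M \con X) \cong \PG(r(M)-1,q)$ is $\GF(q)$-representable. Since $\GF(q)$-representability is preserved under taking restrictions and is unaffected by adding or removing loops and parallel elements, every restriction of $M \con X$ is $\GF(q)$-representable; as a $(q,1)$-stack is by definition a matroid that is \emph{not} $\GF(q)$-representable, $M \con X$ has no $(q,1)$-stack restriction.

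For the inductive step, assume the statement holds with $h-1$ in place of $h$ (for every matroid and every choice of the distinguished set), and suppose toward a contradiction that $M \con X$ has a $(q,h+1)$-stack restriction $S$ with layers $F_1, \dots, F_{h+1}$. The idea is to peel off the first layer by passing to $M' := M \con F_1$, which should simultaneously shorten the stack and shrink the rank of $X$ by one. First, since $F_1 \subseteq E(M) \setminus X$, deletion of $X$ and contraction of $F_1$ commute, and a contraction-minor of $\PG(r(M)-1,q)$ simplifies to a projective geometry of the corresponding rank; combined with $r_{M \del X}(F_1) = r_M(F_1)$ this gives $\si(M' \del X) = \si\bigl((M \del X)\con F_1\bigr) \cong \PG(r(M')-1,q)$, so $M'$ and $X$ satisfy the hypotheses of the lemma. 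The crucial point is that $r_{M'}(X) \le h-1$: since $r_{M'}(X) = r_M(X) - \sqcap_M(X,F_1)$, it suffices to check $\sqcap_M(X,F_1) \ge 1$. If instead $X$ were skew to $F_1$ in $M$, then $X$ would be skew to every subset of $F_1$, and a direct rank computation gives $(M\con X)|F_1 = M|F_1$; but $M|F_1 = (M\del X)|F_1$ is a restriction of a matroid whose simplification is $\PG(r(M)-1,q)$, hence is $\GF(q)$-representable, contradicting the fact that the first layer $S|F_1 = (M \con X)|F_1$ of the stack is not $\GF(q)$-representable.

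By the inductive hypothesis applied to $M'$ and $X$, the matroid $M' \con X = M \con (F_1 \cup X)$ has no $(q,h)$-stack restriction. On the other hand, contracting the first layer of a stack leaves a shorter stack, so $S \con F_1$ is a $(q,h)$-stack with layers $F_2, \dots, F_{h+1}$; and since $S \con F_1 = \bigl((M \con X)|E(S)\bigr)\con F_1$ is a restriction of $(M \con X) \con F_1 = M \con (F_1 \cup X)$, we reach a contradiction. I expect the only delicate point to be the bookkeeping with simplification — verifying $\si\bigl((M\del X)\con F_1\bigr) \cong \PG(r(M\con F_1)-1,q)$ from the standard fact that $\PG(n-1,q)$ contracted by a rank-$k$ set simplifies to $\PG(n-k-1,q)$ — together with ensuring the two reductions (one stack layer and one unit of $r(X)$) stay synchronized, which is precisely the role of the connectivity inequality $\sqcap_M(X,F_1)\ge1$.
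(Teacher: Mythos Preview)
Your proof is correct and follows essentially the same approach as the paper's: induct on $h$, peel off the first layer $F_1$, use the non-$\GF(q)$-representability of $(M\con X)|F_1$ versus the representability of $M|F_1$ to force $\sqcap_M(X,F_1)\ge 1$, and then apply the inductive hypothesis to $M\con F_1$. Your version is more explicit about the simplification bookkeeping and the skewness computation, but the structure and key idea are identical.
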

	\begin{proof}
		The result is clear if $h=0$; suppose that $h > 0$ and that the result holds for smaller $h$. Moreover suppose for a contradiction that $M \con X$ has a $(q,h+1,t)$-stack restriction $S$. Let $F = F_1(S)$. Since $(M \con X)|F$ is not $\GF(q)$-representable but $M|F$ is, it follows that $\sqcap_{M}(F,X) > 0$. Therefore  $r_{M \con F}(X) < r_M(X) \le h$ and $\si(M \con F \del X) \cong \PG(r(M \con F)-1,q)$, so by the inductive hypothesis $M \con (X \cup F)$ has no $(q,h)$-stack restriction. Since $M \con (X \cup F)|(E(S)-F)$ is clearly such a stack, this is a contradiction.  
	\end{proof}
		
	Next we show that a large stack on top of a projective geometry guarantees (in a minor) a large flat with limited connectivity to sets in the geometry:
	
	\begin{lemma}\label{stackfindflat}
			Let $q$ be a prime power and $k \ge 0$ be an integer. If $M$ is a matroid with a $\PG(r(M)-1,q)$-restriction $R$ and a $(k^4,q)$-stack restriction, then there is a minor $M'$ of $M$ of rank at least $r(M) - k$, with a $\PG(r(M')-1,q)$-restriction $R'$ and a rank-$k$ flat $K$ such that $\sqcap_{M'}(X,K) \le \tfrac{1}{2}r_{M'}(X)$ for all $X \subseteq E(R')$.		
		\end{lemma}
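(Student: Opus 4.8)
The plan is to reduce first to a cleaner statement and then to build $K$ inside the stack. The case $k=0$ is immediate (take $M'=M$, $R'=R$, $K=\varnothing$), so assume $k\ge 1$; since $M$ has a $(k^4,q)$-stack restriction $S$ we have $r(M)\ge 2k^4$, and by adding parallel elements we may assume $E(S)\cap E(R)=\varnothing$. First I would observe that it is enough to produce a minor $M'$ of $M$ with $r(M')\ge r(M)-k$, a spanning $\PG(r(M')-1,q)$-restriction $R'$, and a rank-$k$ flat $K$ of $M'$ that is \emph{skew in $M'$ to every rank-$k$ flat of $R'$}. Indeed, for any $X\subseteq E(R')$, writing $r=r_{M'}(X)$ and choosing a flat $X_0$ of $R'$ with $X_0\subseteq\cl_{R'}(X)$ and $r_{M'}(X_0)=\min(r,k)$, the flat $X_0$ lies inside some rank-$k$ flat of $R'$ (note $r(R')\ge r(M)-k\ge k$) and is therefore skew to $K$, so $r_{M'}(X\cup K)\ge r_{M'}(X_0\cup K)=\min(r,k)+k$ and hence $\sqcap_{M'}(X,K)\le r-\min(r,k)$; together with the trivial bound $\sqcap_{M'}(X,K)\le r_{M'}(K)=k$ this gives $\sqcap_{M'}(X,K)\le\tfrac12 r$ in every case. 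So the task reduces to finding a rank-$k$ flat in sufficiently general position relative to the rank-$k$ flats of a surviving spanning $\GF(q)$-geometry.

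To do this I would extract $K$ from the stack, exploiting that a stack — being hereditarily non-$\GF(q)$-representable — cannot be `flattened into' the $\GF(q)$-geometry $R$. The quantitative form of this is Lemma~\ref{stackinprojection}: a stack of height exceeding $h$ cannot occur as a contraction $N/Y$ with $r_N(Y)\le h$ and $\si(N\del Y)\cong\PG(\cdot,q)$. The binding constraint is that every operation must retain a \emph{spanning} $\GF(q)$-geometry of rank at least $r(M)-k$, so the only admissible moves are contractions of flats of $R$ of total rank at most $k$ (a skew complementary flat of $R$ then survives as the needed spanning $\PG(\cdot,q)$-restriction), and by Lemma~\ref{stackrobust} each such contraction divides the available stack height by at most $k+1$. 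Since $k^4$ dwarfs any fixed power of $k$, a bounded number of rounds is affordable. In each round I would combine the stack, a counting (majority) argument over the rank-$k$ flats of the current geometry, and Lemma~\ref{stackinprojection} applied to the appropriate quotient, to locate a stack element lying outside the closures of `too many' bounded-rank flats of the geometry; choosing $k$ such elements so that at each stage their span stays skew to every bounded-rank flat of the current geometry, their closure is the desired $K$. (Lemma~\ref{skewstack} is the model for the sub-step in which a sub-stack is made genuinely skew to a prescribed bounded-rank flat, once a bounded-rank contraction has been performed so that the set to be contracted within the stack itself has bounded rank.)

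The hard part, and where I expect most of the difficulty, is handling all the rank-$k$ flats of $R'$ simultaneously: there are exponentially many, and one cannot treat them one at a time, because the local connectivity between the stack and a union of many flats of $R$ is large, so reducing it would consume the whole stack and far more than $k$ in rank. The way around this that I would pursue is to trade the entire family of rank-$k$ flats for a single bounded-corank quotient of the geometry — contract a flat of $R$ of rank at most $k$, chosen so that in the quotient the still-tall stack sits `freely', which is exactly what Lemma~\ref{stackinprojection} certifies, since otherwise the stack would appear as a contraction inside a $\GF(q)$-geometry after deleting a set of bounded rank. Balancing this against the requirement that the geometry remain spanning of rank at least $r(M)-k$ is the crux, and it is precisely the factor $\tfrac12$ in the statement — rather than genuine skewness of $K$ to all of $R'$, which is impossible because $R'$ spans $M'$ — that leaves enough slack for the construction to go through.
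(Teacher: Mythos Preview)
Your opening reduction is correct: a rank-$k$ flat $K$ skew to every rank-$k$ flat of $R'$ would indeed satisfy $\sqcap_{M'}(X,K)\le\tfrac12 r_{M'}(X)$ for all $X\subseteq E(R')$. The trouble is that this stronger target need not be attainable, and nothing in your sketch shows that it is. Take $R=\PG(r-1,q)$, choose pairwise skew lines $L_1,\dotsc,L_h$ of $R$, and freely add one new point $e_i$ on each $L_i$; then $M|(L_i\cup\{e_i\})\cong U_{2,q+2}$, the sets $F_i=L_i\cup\{e_i\}$ form a $(q,h)$-stack, and every element of $E(M)\setminus E(R)$ lies in the closure of a line of $R$. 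For $k\ge 2$ every nonloop of $M$ therefore lies in $\cl_M(F)$ for some rank-$k$ flat $F$ of $R$, so no nonempty set is skew to all rank-$k$ flats of $R$, and contracting a rank-$\le k$ flat of $R$ (your only permitted move) only pushes such elements onto lower-rank flats of the surviving geometry. Your invocation of Lemma~\ref{stackinprojection} does not rescue this: that lemma forbids tall stacks inside a bounded projection of $R$, but it does not manufacture elements lying outside the closures of all bounded-rank flats of $R$. In the example just described the set $K=\{e_1,\dotsc,e_k\}$ does satisfy the $\tfrac12$-bound (with equality on $X=L_1\cup\dotsb\cup L_j$), so the lemma holds while your intermediate goal fails.

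The paper's argument does not pass through any skewness statement. It chooses $J\subseteq E(M)$ \emph{maximal} subject to $\sqcap_M(X,J)\le\tfrac12 r_M(X)$ for all $X\subseteq E(R)$. If $r_M(J)\ge k$ one is finished; otherwise maximality forces every nonloop of $M/J$ into the closure of some rank-$\le k$ subset of $R$. This is then used to rebuild in $M/J$ a height-$k$ stack each of whose layers has a basis contained in $E(R)$ (Lemmas~\ref{stackrobust} and~\ref{stackinprojection} dispose of the case where the rebuild stalls below height $k$), and from this $R$-based stack one selects $K$ one element per layer, verifying the $\tfrac12$-inequality directly by an inductive local-connectivity computation. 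The extremal choice of $J$ and the construction of an $R$-based stack are the ideas your plan is missing.
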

		\begin{proof}
			
			Let $J \subseteq E(M)$ be maximal so that $\sqcap_M(X,J) \le \tfrac{1}{2}r_M(X)$ for all $X \subseteq E(R)$. Note that $J \cap E(R) = \varnothing$. We may assume that $r_M(J) < k$, as otherwise $J = K$ and $M' = M$ will do. Let $M' = M \con J$.
			
			\begin{claim}
				For each nonloop $e$ of $M'$, there is a set $Z_e \subseteq E(R)$ such that $r_{M'}(Z_e) \le k$ and $e \in \cl_{M'}(Z_e)$. 
			\end{claim}
			\begin{proof}[Proof of claim:]
				Let $e$ be a nonloop of $M'$. By maximality of $J$ there is some $X \subseteq E(R)$ such that $\sqcap_{M}(X,J \cup \{e\}) > \tfrac{1}{2}r_M(X)$. Let $c = \sqcap_M(X, J \cup \{e\})$, noting that $\tfrac{1}{2}r_M(X) < c \le r_M(J \cup \{e\}) \le k$. We also have $\tfrac{1}{2}r_M(X) \ge \sqcap_{M}(X,J) \ge c-1$, so $\sqcap_M(X,J) = c-1$, giving $e \in \cl_{M'}(X)$. Now $r_M(X) \le 2c-1$ and $r_{M'}(X) = r_M(X) - \sqcap_M(X,J) \le (2c-1) - (c-1) = c \le k$. Therefore $Z_e = X$ satisfies the claim. 
			\end{proof}
			
			If $e$ is not parallel in $M'$ to a nonloop of $R$, then $M'|(e \cup Z_e)$ is not $\GF(q)$-representable, as it is a simple cosimple extension of a projective geometry; this fact still holds in any contraction-minor for which $e$ is a nonloop satisfying this condition. Let $j \in \{0, \dotsc, k\}$ be maximal such that $M'$ has a $(q,j,k)$-stack restriction $T$ with the property that, for each $i \in \{1, \dotsc, j\}$, the matroid $T \con (F_1(T) \cup \dotsc \cup F_{i-1}(T))|F_i(T)$ has a basis contained in $E(R)$. For each $i$, let $F_i = F_i(T)$, and $B_i \subseteq E(R)$ be such a basis. We split into cases depending on whether $j \ge k$. 
			
			\emph{Case 1:} $j < k$.
			
			Let $M'' = M' \con E(T) = M \con (E(T)\cup J)$. If $M''$ has a nonloop $x$ that is not parallel in $M' \con E(T)$ to an element of $E(R)$, then the restriction $M''|(x \cup (Z_x-E(T)))$ has rank at most $k$, is not $\GF(q)$-representable, and has a basis contained in $Z_x \subseteq E(R)$; this contradicts the maximality of $j$. Therefore we may assume that every nonloop of $M''$ is parallel to an element of $R$, so $\si(M'') \cong \si(M|(E(R) \cup E(T) \cup J) \con (E(T) \cup J))$. We have $r_M(E(T) \cup J) \le jk+k-1 < k^2$, so by Lemma~\ref{stackinprojection} the matroid $M''$ has no $(k^2,q)$-stack restriction. However, $S$ is a $(k^4,q)$-stack restriction of $M$ and $k^4 \ge k^2 (r_M(E(T) \cup J) + 1)$, so $M''$ has a $(k^2,q)$-stack restriction by Lemma~\ref{stackrobust}. This is a contradiction. 
			
			\emph{Case 2:} $j = k$.
			
			For each $i \in \{0, \dotsc, k\}$, let $M_i = M' \con (F_1 \cup \dotsc \cup F_i)$ and $R_i = R|\cl_{R}(B_{i+1} \cup \dotsc \cup B_k)$. Note that $R_i$ is a $\PG(r(M_i)-1,q)$-restriction of $M_i$. We make a technical claim:
			\begin{claim}
				For each $i \in \{0, \dotsc, k\}$, there is a rank-$(k-i)$ independent set $K_i$ of $M_i$ so that $\sqcap_{M_i}(X,K_i) \le \tfrac{1}{2}r_{M_i}(X)$ for all $X \subseteq E(R_0) \cap E(M_i)$. 
			\end{claim}
			\begin{proof}
				When $i = k$, there is nothing to prove. Suppose inductively that $i \in \{0, \dotsc, k-1\}$ and that the claim holds for larger $i$. Let $K_{i+1}$ be a rank-$(k-i-1)$ independent set in $M_{i+1}$ so that $\sqcap_{M_{i+1}}(X,K_{i+1}) \le \tfrac{1}{2}r_{M_i}(X)$ for all $X \subseteq E(R_{0}) \cap E(M_{i+1})$. The restriction $M_i |F_{i+1}$ is not $\GF(q)$-representable; let $e$ be a nonloop of $M_i|F_{i+1}$ that is not parallel in $M_i$ to a nonloop of $R_i$. Set $K_i = K_{i+1} \cup \{e\}$, noting that $K_i$ is independent in $M_i$. Let $X \subseteq E(R_0) \cap E(M_i)$;  since $M_{i+1} = M_i \con F_{i+1}$ we have 
			\begin{align*}
				\sqcap_{M_i}(X,K_i) &= \sqcap_{M_{i+1}}(X-F_{i+1},K_i) + \sqcap_{M_i}(K_i,F_{i+1}) + \sqcap_{M_i}(X,F_{i+1}) \\ &- \sqcap_{M_i}(X \cup K_i, F_{i+1}).
			\end{align*}
			Now $e$ is a loop and $K_i - \{e\}$ is independent in $M_{i+1}$, so $\sqcap_{M_i}(K_i,F_{i+1}) = 1$, and $\sqcap_{M_{i+1}}(X-F_{i+1},K_i) = \sqcap_{M_{i+1}}(X-F_{i+1},K_{i+1}) \le \tfrac{1}{2} r_{M_{i+1}}(X) = \tfrac{1}{2}(r_{M_i}(X) - \sqcap_{M_i}(X,F_{i+1}))$. This gives
			\[\sqcap_{M_i}(X,K_i) \le \tfrac{1}{2}r_{M_i}(X) + 1 + \tfrac{1}{2}\sqcap_{M_i}(X,F_{i+1}) - \sqcap_{M_i}(X \cup K_i,F_{i+1}).\]
			It therefore suffices to show that $\sqcap_{M_i}(X \cup K_i,F_{i+1}) \ge 1 + \tfrac{1}{2}\sqcap_{M_i}(X,F_{i+1})$. Note that $e \in K_i \cap F_{i+1}$, so $\sqcap_{M_i}(X \cup K_i,F_{i+1}) \ge \max(1,\sqcap_{M_i}(X,F_{i+1}))$. Given this, it is easy to see that the inequality can only be violated if $\sqcap_{M_i}(X \cup K_i,F_{i+1}) = \sqcap_{M_i}(X,F_{i+1}) = 1$. If this is the case, then $\sqcap_{M_i}(X, B_{i+1}) = 1$ and so there is some $f \in E(R_{i+1})$ spanned by $X$ and $B_{i+1}$, since both are subsets of the projective geometry $R_{i+1}$. But $e$ and $f$ are not parallel by choice of $e$, so $\sqcap_{M_i}(X \cup K_i,F_{i+1}) \ge r_{M_i}(\{e,f\}) = 2$, a contradiction. 
		\end{proof}
		Since $r(M_0) = r(M') > r(M)-k$, taking $i = 0$ in the claim now gives the lemma. 
		\end{proof}
		Finally, we use the flat found in the previous lemma and Theorem~\ref{halfwaypoint} to find a large projective geometry minor over a larger field. 
		
		\begin{lemma}\label{stackwin}
		There is an integer-valued function $f_{\ref{stackwin}}(a,b,n,q,t)$ so that, for any prime power $q$ and integers $n,a,b$ with $n \ge 1$ and $1 \le a < b$, if $M \in \cU(a,b)$ has a $\PG(r(M)-1,q)$-restriction and an $(f_{\ref{stackwin}}(a,b,n,q,t),q,t)$-stack restriction, then $M$ has a $\PG(n-1,q')$-minor for some $q' > q$. 
		\end{lemma}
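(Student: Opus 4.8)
The plan is to use Lemma~\ref{stackfindflat} together with a single contraction to replace $M$ by a minor that is much denser than any $\GF(q)$-representable matroid of the same rank, and then to quote Theorem~\ref{halfwaypoint}. Fix an integer $k=k(a,b,n,q)$ (its size is forced at the end: it will be a little more than $2a+f_{\ref{halfwaypoint}}(a,b,n,q)/\log_2 q$), and take $f_{\ref{stackwin}}(a,b,n,q,t)$ large enough to feed the iteration below. Given $M\in\cU(a,b)$ with a $\PG(r(M)-1,q)$-restriction and an $(f_{\ref{stackwin}}(a,b,n,q,t),q,t)$-stack restriction, apply Lemma~\ref{stackfindflat} to obtain a minor $M'$ with a $\PG(r(M')-1,q)$-restriction $R'$ and a rank-$k$ flat $K$ with $\sqcap_{M'}(X,K)\le\tfrac12 r_{M'}(X)$ for all $X\subseteq E(R')$; then pass to $N=M'\con K$ and study $Q:=N|E(R')$.

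The heart of the step is that the local-connectivity bound controls the collapse caused by contracting $K$ in three ways. First, applying it to $W=\cl_{M'}(K)\cap E(R')$, which satisfies $\sqcap_{M'}(W,K)=r_{M'}(W)$, forces $r_{M'}(W)=0$, so $\cl_{M'}(K)$ contains no point of $R'$ and $Q$ has no loops. Second, two points of $R'$ become parallel in $N$ only when the line of $R'$ they span meets $\cl_{M'}(K)$; a short submodularity computation with the $\tfrac12$-bound shows there is at most one such line through any point of $R'$, that distinct such lines are skew in $R'$ and meet $\cl_{M'}(K)$ in distinct points, and hence that there are at most $\elem(M'|\cl_{M'}(K))=O_{a,b,k}(1)$ of them. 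Third, any $A\subseteq E(R')$ with $r_N(A)\le a$ satisfies $r_{M'}(A)\le a+\sqcap_{M'}(A,K)\le a+\tfrac12 r_{M'}(A)$, so $A$ has $R'$-rank at most $2a$ and hence at most $\tfrac{q^{2a}-1}{q-1}$ points. The first two facts give $\elem(Q)\ge\tfrac{q^{r(M')}-1}{q-1}-O_{a,b,k}(1)\ge\tfrac12 q^{r(M')-1}$ (for $r(M')$ large, which holds because a tall stack forces large rank); with the third fact this gives $\tau_a(Q)\ge c\,q^{\,r(M')-2a}=c\,q^{\,k-2a}q^{\,r(N)}$ for some $c=c(a,b,q)>0$, since $r(N)=r(Q)=r(M')-k$. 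Thus $N$ is denser than the rank-$r(N)$ geometry over $\GF(q)$ by the fixed factor $q^{k-2a}$.

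One pass only multiplies the excess density $\tau_a(\cdot)/q^{\,r}$ by a constant, whereas Theorem~\ref{halfwaypoint} needs it to beat the polynomial $r^{f_{\ref{halfwaypoint}}(a,b,n,q)}$, so the step must be iterated: inside the very dense matroid $N$ one locates a spanning $\GF(q)$-geometry restriction of a minor whose rank is still a fixed fraction of $r(M')$, applies Lemma~\ref{stackfindflat} again, and repeats, each pass gaining a further factor $q^{k-2a}$; after about $\log r(M)$ passes the excess density exceeds $r^{f_{\ref{halfwaypoint}}(a,b,n,q)}$ (this is exactly why $k$ is taken with $(k-2a)\log_2 q\ge f_{\ref{halfwaypoint}}(a,b,n,q)$). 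The delicate point --- and the main obstacle --- is keeping a $(k^4,q)$-stack available at every pass together with a $\GF(q)$-geometry restriction to reapply Lemma~\ref{stackfindflat} to: the stack handed to us survives only boundedly many passes, its height dropping by a bounded factor each time by Lemma~\ref{stackrobust} (and one uses Lemma~\ref{skewstack} to make each contracted flat skew to the stack still being carried, so that it is not destroyed prematurely), but after those first few passes the density already amassed is large enough to regenerate a fresh stack from scratch using the weighted-cover machinery of Section~5 (Lemma~\ref{getstack}); this is precisely why a constant $f_{\ref{stackwin}}$, independent of $r(M)$ and $t$, suffices. Running the iteration to completion produces a minor $N^{\ast}\in\cU(a,b)$ with $r(N^{\ast})>1$ and $\tau_a(N^{\ast})\ge r(N^{\ast})^{f_{\ref{halfwaypoint}}(a,b,n,q)}q^{r(N^{\ast})}$, and Theorem~\ref{halfwaypoint} then yields the desired $\PG(n-1,q')$-minor with $q'>q$.
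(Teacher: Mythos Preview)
Your proposal correctly identifies Lemma~\ref{stackfindflat} followed by contraction of the flat $K$ and then Theorem~\ref{halfwaypoint} as the route, and your density estimate $\tau_a(M'\con K) \ge \tau_{2a}(R') \ge q^{r(M')-2a}$ is exactly what is needed. But you have missed the one-line idea that makes a \emph{single} pass suffice, and the iteration you propose in its place has a genuine gap.

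The missing idea is this: before applying Lemma~\ref{stackfindflat}, contract points of the projective-geometry restriction that are not spanned by the stack $S$, so that $r(M) = r(S) \le t k^4$. After Lemma~\ref{stackfindflat} and contraction of $K$, the resulting matroid $M_0 = M' \con K$ has rank $r_0 = r(M') - k$ which is now \emph{bounded} (at most $tk^4$), so the gain $q^{k-2a}$ in $\tau_a(M_0)/q^{r_0}$ is enormous relative to $r_0$: from $r_0 \le tk^4$ one gets $k \ge (r_0/t)^{1/4} \ge t^{-1}r_0^{1/4}$, hence $\tau_a(M_0) \ge q^{r_0 + t^{-1}r_0^{1/4} - 2a}$, which dominates $r_0^{f_{\ref{halfwaypoint}}(a,b,n,q)}q^{r_0}$ once $k$ is chosen large in terms of $a,b,n,q,t$. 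Theorem~\ref{halfwaypoint} then applies directly --- no iteration. Note this makes $f_{\ref{stackwin}}$ depend on $t$; your claim that it need not is stronger than what the lemma asserts, and is in fact the content of the later Theorem~\ref{portable}, which requires the full Theorem~\ref{shortcutmain} as input.

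Your iteration, by contrast, breaks at the step ``inside the very dense matroid $N$ one locates a spanning $\GF(q)$-geometry restriction of a minor whose rank is still a fixed fraction of $r(M')$''. Nothing available supplies such a restriction: after contracting $K$ the projective geometry $R'$ is destroyed, and Theorem~\ref{halfwaypoint} only yields a $\PG$-minor of \emph{fixed} rank independent of $r(N)$, so invoking it would collapse the rank to a constant in one step rather than preserve a fixed fraction. Lemma~\ref{getstack} and the weighted-cover machinery produce stacks, not spanning projective geometries. Without a way to regenerate a spanning $\PG$-restriction, Lemma~\ref{stackfindflat} cannot be re-applied and the scheme stalls after the first pass.
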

		\begin{proof}
			Let $q$ be a prime power, and $t,n,a,b$ be integers so that $t \ge 0$, $n \ge 1$, and $1 \le a < b$. Let $k \ge 2a$ be an integer so that $q^{t^{-1}r^{1/4}-2a} \ge r^{f_{\ref{halfwaypoint}}(a,b,n,q)}$ for all integers $r \ge k$. Set $f_{\ref{stackwin}}(a,b,n,q,t) = k^4$.
			 
			Let $M$ be a matroid with a $\PG(r(M)-1,q)$-restriction $R$ and a $(k^4,q,t)$-stack restriction $S$. We will show that $M$ has a $\PG(n-1,q')$-minor for some $q' > q$; we may assume (by contracting points of $R$ not spanned by $S$ if necessary) that $r(M) = r(S)$. By Lemma~\ref{stackfindflat}, there is a minor $M'$ of $M$, of rank at least $r(M)-k$, with a $\PG(r(M')-1,q)$-restriction $R'$ and a rank-$k$ flat $K$ such that $\sqcap_{M'}(K,X) \le \tfrac{1}{2}r_{M'}(X)$ for all $X \subseteq E(R')$. Let $r = r(M')$,  $M_0 = M' \con K$ and $r_0 = r(M_0)$. Since $k^4 + 2k \le 2k^4 \le r(M) \le tk^4$ and $r_0 = r-k \ge r(M)-2k$, we have 
			\[r \ge \frac{tk^4}{tk^4-k}r_0 > \left(1 + \tfrac{1}{tk^3}\right)r_0 \ge r_0 + t^{-1}(r_0)^{1/4}\]
			By choice of $k$, every rank-$a$ set in $M_0$ has rank at most $2a$ in $M'$, so $\tau_a(M_0) \ge \tau_{2a}(M')$. Moreover, a counting argument gives $\tau_{2a}(M') \ge \tau_{2a}(R') \ge \tfrac{q^r-1}{q^{2a}-1} > q^{r-2a}$, since $r > k \ge 2a$. Therefore  
			\[\tau_a(M_0) \ge \tau_{2a}(M') \ge q^{r_0 + t^{-1}(r_0)^{1/4} -2a} \ge (r_0)^{f_{\ref{halfwaypoint}}(a,b,n,q)}q^{r_0},\]
			and the result follows from Theorem~\ref{halfwaypoint}. 
	
	
	

	\end{proof}
		
	\section{Connectivity}
		
		A matroid $M$ is \emph{weakly round} if there do not exist sets $A$ and $B$ with union $E(M)$, so that $r_M(A) \le r(M)-2$ and $r_M(B) \le r(M)-1$. This is a variation on \emph{roundness}, a notion equivalent to infinite vertical connectivity introduced by Kung [\ref{kungroundness}] under the name of \emph{non-splitting}. Note that weak roundness is preserved by contractions.
		
		It would suffice in this paper to consider roundness in place of weak roundness, but we use weak roundness in order that a partial result, Lemma~\ref{maintechnicalmodified}, is slightly stronger; this should be useful in future work. 
			
		\begin{lemma}\label{roundnessreduction}
			Let $a \ge 1$ and $q \ge 2$ be integers, and $\alpha \ge 0$ be a real number. If $M$ is a matroid with $\tau_a(M) \ge \alpha q^{r(M)}$, then $M$ has a weakly round restriction $N$ such that $\tau_a(N) \ge \alpha q^{r(N)}$. 
		\end{lemma}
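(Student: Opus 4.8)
The plan is to argue by induction on $r(M)$. If $M$ is itself weakly round, then $N = M$ works, so I may assume that it is not. Note that this base case absorbs all $M$ of rank at most $2$: if $A \cup B = E(M)$ then submodularity gives $r(M) = r_M(A \cup B) \le r_M(A) + r_M(B)$, so a pair with $r_M(A) \le r(M) - 2$ and $r_M(B) \le r(M) - 1$ would force $r(M) \ge 3$; hence every matroid of rank at most $2$ is weakly round and the recursion bottoms out.

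So suppose $M$ is not weakly round, and fix $A, B \subseteq E(M)$ with $A \cup B = E(M)$, $r_M(A) \le r(M) - 2$, and $r_M(B) \le r(M) - 1$; in particular $r(M|A) < r(M)$ and $r(M|B) < r(M)$. Since ranks are preserved under restriction, concatenating a smallest cover of $M|A$ by sets of rank at most $a$ with such a cover of $M|B$ produces a cover of $E(M)$, so $\tau_a(M) \le \tau_a(M|A) + \tau_a(M|B)$. I then split on the size of $\tau_a(M|B)$. If $\tau_a(M|B) \ge q^{-1}\tau_a(M)$, then $\tau_a(M|B) \ge q^{-1}\alpha q^{r(M)} = \alpha q^{r(M)-1} \ge \alpha q^{r(M|B)}$, and applying the inductive hypothesis to $M|B$ yields the required $N$. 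If instead $\tau_a(M|B) < q^{-1}\tau_a(M)$, then $\tau_a(M|A) > (1 - q^{-1})\tau_a(M) \ge q^{-2}\tau_a(M) \ge \alpha q^{r(M)-2} \ge \alpha q^{r(M|A)}$, where the inequality $1 - q^{-1} \ge q^{-2}$ holds because $q \ge 2$ gives $q^2 - q - 1 \ge 0$; applying the inductive hypothesis to $M|A$ yields $N$. In either branch $N$ is a weakly round restriction of $M$ with $\tau_a(N) \ge \alpha q^{r(N)}$, as required.

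There is no genuinely hard step here; the only point requiring care is that the density budget be matched to the rank drop. Restricting to $B$ loses a factor of $q$ but drops the rank by at least $1$, while restricting to $A$ loses a factor of $q^2$ — which is precisely what $1 - q^{-1} \ge q^{-2}$ buys, and is the only place $q \ge 2$ is used — but drops the rank by at least $2$. The case $\alpha = 0$ needs no separate treatment, since the recursion still terminates (ranks strictly decrease along it) and rank-at-most-$2$ matroids are weakly round.
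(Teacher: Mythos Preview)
Your proof is correct and follows essentially the same approach as the paper: induct on $r(M)$, use non-weak-roundness to split $E(M)$ into two lower-rank pieces, and apply subadditivity of $\tau_a$ together with $q\ge 2$ to guarantee one piece retains the required density. The paper's version is slightly simpler in that it ignores the asymmetry between $A$ and $B$ and just observes that one of $\tau_a(M|A),\tau_a(M|B)$ is at least $\tfrac{1}{2}\tau_a(M)\ge \alpha q^{r(M)-1}$, which suffices since both pieces have rank at most $r(M)-1$; your two-case split works but is not needed.
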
	
		\begin{proof}
			If $r(M) \le 2$, then $M$ is weakly round, and $N = M$ will do; assume that $r(M) > 2$, and $M$ is not weakly round. There are sets $A,B \subseteq E(M)$ such that $r(M|A) < r(M)$, $r(M|B) < r(M)$ and $A \cup B = E(M)$. Now, $\tau_a(M|A) + \tau_a(M|B) \ge \tau_a(M) \ge \alpha q^{r(M)}$, so  one of $M|A$ or $M|B$ satisfies $\tau_a \ge \tfrac{1}{2}\alpha q^{r(M)} \ge \alpha q^{r(M)-1}$. The lemma follows by induction. 
		\end{proof}
		
		The way we exploit weak roundness of $M$ is to contract one restriction of $M$ into another restriction of larger rank:
		
		\begin{lemma}\label{exploitroundness}
		Let $M$ be a weakly round matroid, and $X,Y \subseteq E(M)$ be sets with $r_M(X) < r_M(Y)$. There is a minor $N$ of $M$ so that $N|X = M|X$, $N|Y = M|Y$, and $Y$ is spanning in $N$. 
		\end{lemma}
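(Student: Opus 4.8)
The plan is to prove the lemma by induction on the quantity $r(M) - r_M(Y) \ge 0$, at each stage contracting a single carefully chosen element. In the base case $r_M(Y) = r(M)$ the set $Y$ is already spanning, so we may take $N = M$.

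For the inductive step, suppose $r_M(Y) < r(M)$. Since $r_M(X) < r_M(Y) \le r(M) - 1$, we get $r_M(X) \le r(M) - 2$. The key point is that $\cl_M(X) \cup \cl_M(Y)$ cannot equal $E(M)$: otherwise the sets $A = \cl_M(X)$ and $B = \cl_M(Y)$ would have union $E(M)$ with $r_M(A) \le r(M) - 2$ and $r_M(B) \le r(M) - 1$, contradicting weak roundness of $M$. Hence I can choose an element $e \in E(M) \setminus (\cl_M(X) \cup \cl_M(Y))$; in particular $e$ is a nonloop, and $e \notin X \cup Y$.

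Next I would contract $e$ and check that the hypotheses are restored for $M \con e$. Because $e \notin \cl_M(X)$, for each $A \subseteq X$ we have $e \notin \cl_M(A)$ and hence $r_{M \con e}(A) = r_M(A \cup \{e\}) - 1 = r_M(A)$, so $(M \con e)|X = M|X$; symmetrically $(M \con e)|Y = M|Y$ since $e \notin \cl_M(Y)$. Weak roundness is preserved by contraction, $r(M \con e) = r(M) - 1$, the inequality $r_{M \con e}(X) < r_{M \con e}(Y)$ still holds, and the induction parameter $r(M \con e) - r_{M \con e}(Y)$ drops by one. Applying the inductive hypothesis to $M \con e$ with the same sets $X$ and $Y$ produces a minor $N$ of $M \con e$, hence of $M$, with $N|X = M|X$, $N|Y = M|Y$, and $Y$ spanning in $N$, completing the induction.

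I expect no serious obstacle here: the argument rests entirely on picking the right induction parameter and on a single application of weak roundness to extract an element outside both $\cl_M(X)$ and $\cl_M(Y)$. The only things that need checking are that contracting such an element leaves the restrictions $M|X$ and $M|Y$ unchanged — equivalently, that $e$ is skew to each of $X$ and $Y$ — and the degenerate small-rank cases, both of which are immediate.
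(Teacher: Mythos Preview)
Your proof is correct and is essentially the same argument as the paper's: both hinge on using weak roundness to find an element outside $\cl_M(X) \cup \cl_M(Y)$ and contracting it while preserving $M|X$ and $M|Y$. The only difference is cosmetic---the paper contracts a maximal set $C \subseteq E(M) \setminus (X \cup Y)$ with $(M/C)|X = M|X$ and $(M/C)|Y = M|Y$ in one step and then applies weak roundness once, whereas you do the same thing one element at a time by induction on $r(M) - r_M(Y)$.
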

		\begin{proof}
		Let $C \subseteq E(M) - X \cup Y$ be maximal such that $(M \con C)|X = M|X$ and $(M \con C)|Y = M|Y$. The matroid $M \con C$ is weakly round, and by maximality of $C$ we have $E(M \con C) = \cl_{M \con C}(X) \cup \cl_{M \con C}(Y)$. If $r_{M \con C}(Y) < r(M \con C)$, then since $r_{M \con C}(X) \le r_{M \con C}(Y)-1$, the sets $\cl_{M \con C}(X)$ and $\cl_{M \con C}(Y)$ give a contradiction to weak roundness of $M \con C$. Therefore $Y$ is spanning in $M \con C$ and $N = M \con C$ satisfies the lemma. 
		\end{proof}
			
	\section{The Main Result}
	
	We are almost ready to prove Theorem~\ref{main}; we first prove a more technical statement from which it will follow. 
	
	\begin{lemma}\label{maintechnicalmodified}
		There is an integer-valued function $f_{\ref{maintechnicalmodified}}(a,b,n,q,t)$ so that, for any prime power $q$ and integers $a,b,n,t$ with $1 \le a < b$ and $t \ge 1$, if $M \in \cU(a,b)$ is weakly round and has an $((a+1)n,q,t)$-stack restriction and a $\PG(f_{\ref{maintechnicalmodified}}(a,b,n,q,t)-1,q)$-minor, then either $M$ has a minor $N$ with a $\PG(r(N)-1,q)$-restriction and an $(n,q,t)$-stack restriction, or $M$ has a $\PG(n-1,q')$-minor for some $q' > q$. 
	\end{lemma}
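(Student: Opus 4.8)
Write $N_0 := f_{\ref{maintechnicalmodified}}(a,b,n,q,t)$, to be chosen huge; let $S$ be the given $((a+1)n,q,t)$-stack restriction and put $F := E(S)$, so $r_M(F) \le t(a+1)n$. Since $\tau_a$ never increases under deletion and a rank-$\le a$ cover of a matroid restricts to a rank-$\le a$ cover of any contraction-minor, $\tau_a(M) \ge \tau_a(M')$ for every minor $M'$ of $M$; hence $\tau_a(M) \ge \tau_a(\PG(N_0-1,q)) > q^{N_0-a}$, so $M$ is extremely dense. The aim is to produce a minor $N$ of $M$ carrying a \emph{spanning} $\PG(\,\cdot\,,q)$-restriction over an $(n,q,t)$-stack restriction --- the first alternative, and exactly the hypothesis of Lemma~\ref{stackwin} --- while using Theorem~\ref{halfwaypoint} as an escape hatch: as soon as some restriction has $\tau_a$ exceeding $r^{f_{\ref{halfwaypoint}}}q^{r}$ in its own rank $r$, that restriction, and so $M$, already has a $\PG(n-1,q')$-minor with $q'>q$, giving the second alternative. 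The tension to manage is that the geometry is a \emph{minor} whereas the stack is a \emph{restriction}: revealing the geometry as a restriction forces a contraction, and a contraction can wreck the stack --- the factor $a+1$ in the number of layers being precisely the cost of one application of Lemma~\ref{skewstack}.

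\emph{The clean case.} Suppose there are an independent $C_0 \subseteq E(M)$ and a set $R_0$ with $(C_0\cup R_0)\cap F=\varnothing$ (enlarge by parallel elements if necessary), with $(M\con C_0)|R_0 \cong \PG(N_0-1,q)$ and $\sqcap_M(C_0,F) \le a$. Apply Lemma~\ref{skewstack} to $S$ and $X := C_0$: there is $C^{*}\subseteq F$ such that $(M\con C^{*})|F$ has an $(n,q,t)$-stack restriction $S^{*}$ with $C_0$ skew to $E(S^{*})$ in $M\con C^{*}$. Set $M_2 := M\con(C^{*}\cup C_0)$. Skewness of $C_0$ to $E(S^{*})$ in $M\con C^{*}$ makes $M_2|E(S^{*})$ still equal to the $(n,q,t)$-stack $S^{*}$; and contracting the elements $C^{*}\subseteq F$ from $(M\con C_0)|R_0$ collapses only a flat of rank $\sqcap_{M\con C_0}(R_0,C^{*}) \le r_M(F)$, so the simple part of $M_2|R_0$ is a $\PG(N_0'-1,q)$-restriction of $M_2$ with $N_0' \ge N_0 - r_M(F)$, still huge. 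Now $M_2$ is weakly round (contraction preserves weak roundness) and $r_{M_2}(E(S^{*})) \le tn < N_0'$, so Lemma~\ref{exploitroundness}, applied to $M_2$ with this projective geometry as $Y$ and $E(S^{*})$ as $X$, yields a minor $N$ of $M_2$ in which the $\PG(N_0'-1,q)$-restriction is spanning and $S^{*}$ survives: the first alternative.

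\emph{Reducing to the clean case.} Apply Lemma~\ref{reduceconn} with $Y = F$ to get $X \subseteq E(M)$ with $\sqcap_M(X,F) \le a$ and $\tau_a(M|X) \ge \binom{b-1}{a}^{a-r_M(F)}\tau_a(M)$, still huge because $r_M(F)$ is bounded in the parameters; take $X\cap F=\varnothing$. If $M|X$ has a minor isomorphic to $\PG(N_0-1,q)$ --- say $(M|X)\con C_0\del D_0$ with $C_0, D_0\subseteq X$ disjoint --- then, writing $R_0 := X\setminus(C_0\cup D_0)$, we have $M\con C_0|R_0 = (M|X)\con C_0|R_0 \cong \PG(N_0-1,q)$, and $\sqcap_M(C_0,F) \le \sqcap_M(X,F) \le a$ by monotonicity of $\sqcap$, so the clean case applies --- with the \emph{original} $(a+1)n$-layer stack $S$ --- and we are done. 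Locating this geometry is the crux. If $\tau_a(M|X) \ge r(M|X)^{f_{\ref{halfwaypoint}}(a,b,N_0,q)}q^{r(M|X)}$, then Theorem~\ref{halfwaypoint} supplies a $\PG(N_0-1,q')$-minor of $M|X$ with $q'>q$, the second alternative; otherwise the originally given geometry need not sit inside $M|X$ and one must re-derive a projective geometry \emph{over $\GF(q)$} of large rank inside $M|X$ from its density, controlling in particular $r(M|X)$ against $N_0$ so that the weighted-cover and halfway-point results can be pushed through. This re-derivation, and the field-pinning it requires, is the main obstacle and carries the bulk of the work.
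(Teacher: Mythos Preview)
Your clean case contains a false step: contracting $C^{*}\subseteq F$ from $M\con C_0$ does \emph{not} in general leave a projective geometry on $R_0$. The matroid $M_2|R_0$ is a quotient of $\PG(N_0-1,q)$, but the simplification of an arbitrary quotient of a projective geometry need not be a projective geometry. (Take $\PG(3,2)$ and contract a freely added external point: the result has fifteen distinct points in rank three, not seven.) This step would be valid only if $C^{*}$ were spanned, modulo $C_0$, by a flat of the geometry $R_0$, which you have no control over.

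More seriously, your reduction to the clean case is unfinished by your own admission, and the two obstacles you name---re-deriving a geometry over $\GF(q)$ inside $M|X$, and controlling $r(M|X)$ against $N_0$---are exactly where the paper's argument diverges from yours. The paper does not try to reuse the given projective-geometry minor at all past the first line. Instead it first contracts a maximal subset of the contraction set $C$ (where the given geometry is $M\con C \del D$) that is skew to $E(S)$, producing a weakly round minor $M'$ with $r(M') \le r(G)+ht$; this is the rank control. It then applies Lemma~\ref{reduceconn} as you do, followed by Lemma~\ref{skewstack}, to obtain a dense set $X$ and an $(n,q,t)$-stack $S'$ that is skew to $X$ in $M'\con C'$. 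The decisive move is to invoke Theorem~\ref{halfwaypoint} on $(M'\con C')|X$ with parameters $(a,b,nt+1,\,q-1)$, not $(a,b,N_0,q)$: since $r_{M'\con C'}(X) \le r(M')$ is bounded linearly in $m$ while $\tau_a((M'\con C')|X)$ is at least of order $q^{r(M')}$, the ratio $q/(q-1)$ pushes $\tau_a$ above the threshold $r^{f}(q-1)^{r}$, and Theorem~\ref{halfwaypoint} delivers a $\PG(nt,q')$-minor with $q' > q-1$. Either $q' > q$, giving the second alternative outright, or $q' = q$; in the latter case this fresh geometry is obtained by contracting a set $C'' \subseteq X$, which is skew to $E(S')$, so both the stack $S'$ and the geometry survive as restrictions of $M'\con(C'\cup C'')$, and Lemma~\ref{exploitroundness} finishes. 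Calling Theorem~\ref{halfwaypoint} with $q-1$ in place of $q$ is precisely the ``field-pinning'' device you were looking for, and it removes any need for your clean-case manoeuvre.
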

	\begin{proof}
		Let $q$ be a prime power and $a,b,n,t$ be integers with $1 \le a < b$ and $t \ge 1$. Let $d = \tbinom{b-1}{a}$ and $h = (a+1)n$. Set $f_{\ref{maintechnicalmodified}}(a,b,n,q,t)$ to be an integer $m > 2$ so that $d^{-2ht}q^{r-ht-a} \ge r^{f_{\ref{halfwaypoint}}(a,b,nt+1,q-1)}(q-1)^r$ for all integers $r \ge m/2$. 
		
		Let $M$ be a weakly round matroid with a $\PG(m-1,q)$-minor $G = M \con C \del D$ and an $(h,q,t)$-stack restriction $S$. Let $M'$ be obtained from $M$ by contracting a maximal subset of $C$ that is skew to $E(S)$; clearly $M'$ has $G$ as a minor and $r(M') \le r(G) + r(S) \le r(G) + ht$. We have $\tau_a(M') \ge \tau_a(G) \ge \tfrac{q^{r(G)}-1}{q^a-1} > q^{r(M')-ht-a}$ and $r(S) \le ht$; by Lemma~\ref{reduceconn} there is a set $X \subseteq E(M')$ such that $\tau_a(M'|X) \ge d^{a-ht}q^{r(M')-ht-a}$ and $\sqcap_{M'}(X,E(S)) \le a$. If we choose a maximal such $X$, then we have $r_{M'}(X) \ge r(M')-r(S) \ge m-ht$. 
		
		By Lemma~\ref{skewstack}, there is a set $C' \subseteq E(S)$ such that $(M' \con C')|E(S)$ has an $(n,q,t)$-stack restriction $S'$, and $E(S')$ is skew to $X$ in $M' \con C'$. By Corollary~\ref{kdensitycon}, we have 
		\[\tau_a((M' \con C')|X) \ge d^{a-ht-r_{M'}(C')}q^{r(M')-ht-a} \ge d^{-2ht}q^{r((M' \con C')|X)-ht-a},\]
		and since $r_{M' \con C'}(X) \ge r_{M'}(X) - ht \ge m-2ht \ge m/2 > 1$, it follows from Theorem~\ref{halfwaypoint} and the definition of $m$ that $(M' \con C')|X$ has a $\PG(nt,q')$-minor $G' = (M' \con C')\con C'' \del D''$  for some $q' > q-1$, where $C'' \subseteq X$. Now $M' \con (C' \cup C'')$ is a weakly round matroid with $S'$ as a restriction and $G'$ as a restriction; if $q' > q$ then we have the second outcome as $nt \ge n-1$, otherwise $q' = q$ and the first outcome follows from Lemma~\ref{exploitroundness} and the fact that $r(S') \le nt < r(G')$. 
	\end{proof}

	We now restate and prove Theorem~\ref{mainexp}, which follows routinely.
		
	\begin{theorem}\label{shortcutmain}
		There is an integer-valued function $\alpha_{\ref{shortcutmain}}(a,b,n,q)$ so that, for any integers $a,b,n$ and $q$ with $n \ge 1$, $q \ge 2$ and $1 \le a < b$, if $M \in \cU(a,b)$ satisfies $\tau_a(M) \ge \alpha_{\ref{shortcutmain}}(a,b,n,q)q^{r(M)}$, then $M$ has a $\PG(n-1,q')$-minor for some $q' > q$. 
	\end{theorem}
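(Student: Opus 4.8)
The plan is to run the machinery of the previous sections in sequence: use the surplus density of $M$ over $q^{r(M)}$ to extract a tall stack via Lemma~\ref{getstack}, use the density that survives that extraction to produce a rank-$m$ projective geometry over $\GF(q)$ (or over a larger field, in which case we are done) via Theorem~\ref{halfwaypoint}, and finally combine the stack with that geometry — first through Lemma~\ref{maintechnicalmodified} and then through Lemma~\ref{stackwin} — to obtain a projective geometry over a field strictly larger than $\GF(q)$. Weak roundness, supplied by Lemma~\ref{roundnessreduction} and preserved under contraction, is carried along so that Lemma~\ref{maintechnicalmodified} can be applied.

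Concretely, I would first fix the parameters. We may assume $f_{\ref{stackwin}}$ and $f_{\ref{maintechnicalmodified}}$ are at least their `$n$'-arguments, since enlarging a stack-height or projective-geometry-rank requirement only strengthens a hypothesis. Set $n' = f_{\ref{stackwin}}(a,b,n,q,a+1)$, $m = f_{\ref{maintechnicalmodified}}(a,b,n',q,a+1)$, $h = (a+1)n'$, and let $d = \max(q+2, \binom{b-1}{a}+1)$, so that $d > \max(q+1,\binom{b-1}{a})$. Then choose a positive integer $\lambda$ large enough that $d^{-a}\lambda q^r \ge r^{f_{\ref{halfwaypoint}}(a,b,m,q-1)}(q-1)^r$ for every integer $r \ge 1$; this is possible because $q \ge 2$ gives $q/(q-1) > 1$, so $(q/(q-1))^r / r^{f_{\ref{halfwaypoint}}(a,b,m,q-1)} \to \infty$ and hence has a positive infimum over $r \ge 1$. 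Finally set $\alpha_{\ref{shortcutmain}}(a,b,n,q) = \alpha_{\ref{getstack}}(a,b,h,q,\lambda)$.

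Now given $M \in \cU(a,b)$ with $\tau_a(M) \ge \alpha_{\ref{shortcutmain}}(a,b,n,q)q^{r(M)}$, apply Lemma~\ref{roundnessreduction} to obtain a weakly round restriction $M_1$ with $\tau_a(M_1) \ge \alpha_{\ref{shortcutmain}}(a,b,n,q)q^{r(M_1)}$. By Lemma~\ref{weightedcover}, $\tau^d(M_1) \ge \tau_a(M_1) \ge \alpha_{\ref{getstack}}(a,b,h,q,\lambda)q^{r(M_1)}$, so Lemma~\ref{getstack} yields a contraction-minor $N$ of $M_1$ — still weakly round — with an $(h,q,a+1)$-stack restriction and $\tau^d(N) \ge \lambda q^{r(N)}$. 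Applying Lemma~\ref{weightedcover} once more, $\tau_a(N) \ge d^{-a}\tau^d(N) \ge d^{-a}\lambda q^{r(N)} \ge r(N)^{f_{\ref{halfwaypoint}}(a,b,m,q-1)}(q-1)^{r(N)}$, and $r(N) \ge 2h > 1$ because $N$ carries a nontrivial stack. Theorem~\ref{halfwaypoint}, invoked with parameters $(a,b,m,q-1)$, then gives a $\PG(m-1,q')$-minor of $N$ for some prime power $q' > q-1$, i.e.\ $q' \ge q$. If $q' > q$ we are done since $m \ge n$. If $q' = q$, then $N$ is weakly round, lies in $\cU(a,b)$, has an $((a+1)n',q,a+1)$-stack restriction and a $\PG(f_{\ref{maintechnicalmodified}}(a,b,n',q,a+1)-1,q)$-minor, so Lemma~\ref{maintechnicalmodified} with parameters $(a,b,n',q,a+1)$ applies: either $N$ has a $\PG(n'-1,q'')$-minor with $q''>q$, and we are done as $n' \ge n$, or $N$ has a minor $N'$ with a $\PG(r(N')-1,q)$-restriction and an $(n',q,a+1)$-stack restriction, which (since $n' \ge f_{\ref{stackwin}}(a,b,n,q,a+1)$) contains an $(f_{\ref{stackwin}}(a,b,n,q,a+1),q,a+1)$-stack restriction; then Lemma~\ref{stackwin} produces the desired $\PG(n-1,q')$-minor with $q'>q$.

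The only genuinely delicate point is the choice of $\lambda$: it must be a constant independent of $r(N)$ and yet dominate the polynomial factor $r(N)^{f_{\ref{halfwaypoint}}}$ in Theorem~\ref{halfwaypoint}, and this is precisely where the exponential gap between base $q$ and base $q-1$ is spent. The remaining work is bookkeeping — feeding the stack and the geometry into the Section 6--8 results in the right order and keeping the parameters consistent — with the one thing to watch being that the stack delivered by Lemma~\ref{getstack} is tall enough to survive the factor-$(a+1)$ loss in Lemma~\ref{maintechnicalmodified} and still clear the threshold of Lemma~\ref{stackwin}; this is why $h$ is taken to be $(a+1)f_{\ref{stackwin}}(a,b,n,q,a+1)$.
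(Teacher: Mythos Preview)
Your proof is correct and essentially identical to the paper's: reduce to weak roundness, extract a tall stack via Lemma~\ref{getstack} while retaining density, use Theorem~\ref{halfwaypoint} to produce a $\PG(m-1,q')$-minor, and in the $q'=q$ branch feed the stack and geometry through Lemma~\ref{maintechnicalmodified} and then Lemma~\ref{stackwin}. The only wrinkle you skip is that the statement allows $q$ to be any integer $\ge 2$, whereas stacks, $\alpha_{\ref{getstack}}$, $f_{\ref{maintechnicalmodified}}$, and $f_{\ref{stackwin}}$ are defined only for prime-power $q$; the paper handles this by letting $q^*$ be the least prime power $\ge q$, using $q^*$ when defining $h$, and applying Theorem~\ref{halfwaypoint} once to $M$ itself (before Lemma~\ref{getstack} is invoked) to reduce to the case $q=q^*$.
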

	\begin{proof}
		
		Let $a,b,n$ and $q$ be integers with $n \ge 1$, $q \ge 2$ and $1 \le a < b$. Let $d = \max(q,\binom{b-1}{a})+2$. Let $q^*$ be the smallest prime power so that $q^* \ge q$. Let $h = \max(n,f_{\ref{stackwin}}(a,b,n,q^*,a+1))$. Let $h' = (a+1)h$ and $m = f_{\ref{maintechnicalmodified}}(a,b,h,q,a+1)$. Let $\lambda > 0$ be an integer such that $\lambda d^{-a} q^r \ge r^{f_{\ref{halfwaypoint}}(a,b,m,q-1)}(q-1)^r$ for all integers $r \ge 1$. Set $\alpha_{\ref{shortcutmain}}(a,b,n,q) = \alpha = \max(\lambda,f_{\ref{getstack}}(a,b,h',q,\lambda))$. 
		
		Let $M \in \cU(a,b)$ satisfy $\tau_a(M) \ge \alpha q^{r(M)}$. By Theorem~\ref{halfwaypoint} and the fact that $\alpha > \lambda$, $M$ has a $\PG(m-1,q')$-minor for some $q' > q-1$; if $q' \ne q$ then we are done because $h \ge n$, so we can assume that $q = q^* = q'$. By Lemma~\ref{roundnessreduction}, $M$ has a weakly round restriction $M'$ with $\tau_a(M') \ge \alpha q^{r(M')}$. By Lemma~\ref{getstack}, $M'$ has a contraction-minor $N$ with an $(h',q,a+1)$-stack restriction, satisfying $\tau^d(N) \ge \lambda q^{r(N)}$. We have $\tau_a(N) \ge d^{-a}\tau^d(N) \ge d^{-a}\lambda q^{r(N)}$, so by definition of $\lambda$ the matroid $N$ has a $\PG(m-1,q')$-minor for some $q'' > q-1$. As before, we may assume that $q'' = q$. By Lemma~\ref{maintechnicalmodified} and the definitions of $h'$ and $m$, we may assume that there is a minor $N'$ of $N$ with a $\PG(r(N')-1,q)$-restriction and an $(h,q,a+1)$-stack restriction. The result now follows from Lemma~\ref{stackwin}. 
		\end{proof}
	
	Theorem~\ref{main} is a fairly simple consequence. 
	
	\begin{theorem}\label{mainrep}
			If $a \ge 1$ is an integer, and $\cM$ is a minor-closed class of matroids, then there is an integer $c$ so that either:
		\begin{enumerate}
			\item\label{mr1} $\tau_a(M) \le r(M)^{c}$ for all $M \in \cM$, or
			\item\label{mr2} There is a prime power $q$ so that $\tau_a(M) \le c q^{r(M)}$ for all $M \in \cM$ and $\cM$ contains all $\GF(q)$-representable matroids, or
			\item\label{mr3} $\cM$ contains all rank-$(a+1)$ uniform matroids. 
		\end{enumerate}
	\end{theorem}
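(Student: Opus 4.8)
The plan is to dispose of outcome~(\ref{mr3}) first and reduce the rest to the two imported theorems. If $\cM$ contains all rank-$(a+1)$ uniform matroids we are in case~(\ref{mr3}); otherwise, since $U_{a+1,b}$ is a minor of $U_{a+1,b'}$ whenever $b'\ge b$, there is an integer $b>a$ with $U_{a+1,b}\notin\cM$, and minor-closedness gives $\cM\subseteq\cU(a,b)$, so fix such a $b$. I would then record one crude boundedness fact: the points of a normal rational curve give $\PG(a,q')$ a $U_{a+1,b}$-restriction once $q'\ge\max(a,b-1)$, so there is an integer $\beta=\beta(a,b)$ with the property that no member of $\cM$ has a $\PG(n-1,q')$-minor when $q'>\beta$ and $n\ge a+1$; in particular only finitely many prime powers can be the order of arbitrarily-high-rank projective geometries occurring in $\cM$. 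The argument then splits on whether $\cM$ has a rank-$n$ projective geometry minor (over a prime power possibly depending on $n$) for every $n\ge1$; since $\PG(n-1,q)$ is a restriction of $\PG(n'-1,q)$ for $n\le n'$, this is a monotone property of $n$, so either it fails for some threshold $n^{*}$, or it holds for all $n$.

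In the first case I would apply Theorem~\ref{mainpoly} with this $n^{*}$ and the fixed $b$ to get an integer $m$ so that every $M\in\cU(a,b)$ with $r(M)\ge2$ and $\tau_a(M)\ge r(M)^{m}$ has a rank-$n^{*}$ projective geometry minor. Since $\cM$ is minor-closed and contains no such matroid, every $M\in\cM$ of rank at least $2$ has $\tau_a(M)<r(M)^{m}$, while matroids of rank at most $1$ have $\tau_a\le1$ (the rank-$0$ case being degenerate); enlarging the constant slightly yields outcome~(\ref{mr1}).

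In the second case, for each $n$ choose a prime power $q_n$ with $\PG(n-1,q_n)\in\cM$; the boundedness fact forces $q_n\le\beta$ for $n\ge a+1$, so some value occurs for infinitely many $n$, and restriction plus minor-closedness shows that value $q$ satisfies $\PG(n-1,q)\in\cM$ for all $n$. Every such $q$ is at most $\beta$ and there are finitely many prime powers at most $\beta$, so I take $q$ maximal. Then $\cM$ contains all $\GF(q)$-representable matroids: any such matroid is a restriction of $\PG(N-1,q)\con F$ for a rank-$k$ flat $F$ of a sufficiently large projective geometry --- contracting $F$ produces parallel classes of size $q^{k}$ and $\tfrac{q^{k}-1}{q-1}$ loops, enough to realize any prescribed parallel classes and loops over a restriction of $\PG(r(M)-1,q)$ --- hence a minor of $\PG(N-1,q)\in\cM$. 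For the density bound, maximality of $q$ gives, for each prime power $q'$ with $q<q'\le\beta$, an integer $n(q')$ with $\PG(n(q')-1,q')\notin\cM$, and $q'>\beta$ is covered by the boundedness fact, so there is a single integer $n_0\ge1$ for which $\cM$ has no $\PG(n_0-1,q')$-minor with any $q'>q$. Set $c=\alpha_{\ref{shortcutmain}}(a,b,n_0,q)$; if some $M\in\cM$ satisfied $\tau_a(M)\ge cq^{r(M)}$ then Theorem~\ref{shortcutmain} would give it a $\PG(n_0-1,q')$-minor with $q'>q$, contradicting minor-closedness, so $\tau_a(M)<cq^{r(M)}$ for all $M\in\cM$, which is outcome~(\ref{mr2}).

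All the substantive difficulty sits inside Theorems~\ref{mainpoly} and~\ref{shortcutmain}; within this reduction the only genuinely delicate point is getting from ``a projective geometry of every rank, over fields that may a priori vary with the rank'' to ``a single field order $q$ valid for all ranks at once'' in the second case, and that is precisely what the boundedness-plus-pigeonhole step supplies, with the bounded-rank matroids only ever perturbing the constant.
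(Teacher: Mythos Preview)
Your proof is correct and follows essentially the same route as the paper: reduce to $\cM\subseteq\cU(a,b)$, bound the fields for which large projective geometries can occur via a $U_{a+1,b}$-restriction of $\PG(a,q')$, use Theorem~\ref{mainpoly} to get the polynomial bound when some rank of projective geometry is absent, and otherwise use pigeonhole over the finitely many admissible field orders to fix a maximal $q$ and invoke Theorem~\ref{shortcutmain} for the exponential bound. The only cosmetic difference is that the paper phrases the dichotomy as ``for some $n$ the density bound of Theorem~\ref{mainpoly} holds throughout $\cM$'' versus its negation, whereas you phrase it directly as ``$\cM$ omits a rank-$n^{*}$ projective geometry'' versus its negation; Theorem~\ref{mainpoly} makes these equivalent.
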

	\begin{proof}
		We may assume that~(\ref{mr3}) does not hold; let $b > a$ be an integer such that $\cM \subseteq \cU(a,b)$. As $U_{a+1,b}$ is a simple matroid that is $\GF(q)$-representable whenever $q \ge b$ (see [\ref{hirschfeld}]), we have $\PG(a,q') \notin \cM$ for all $q' \ge b$.
		
		If, for some integer $n > a$, we have $\tau_a(M) < r(M)^{f_{\ref{mainpoly}}(a,b,n)}$ for all $M \in \cM$ of rank at least $2$, then~(\ref{mr1}) holds. We may therefore assume that, for all $n > a$, there exists a matroid $M_n \in \cM$ such that $r(M_n) \ge 2$ and $\tau_a(M_n) \ge r(M_n)^{f_{\ref{mainpoly}}(a,b,n)}$. 
		
		By Theorem~\ref{mainpoly}, it follows that for all $n > a$ there exists a prime power $q'_n$ such that $\PG(n-1,q'_n) \in \cM$. We have $q'_n < b$ for all $n$, so there are finitely many possible $q'_n$, and so there is a prime power $q_0 < b$ such that $\PG(n-1,q_0) \in \cM$ for infinitely many $n$, implying that $\cM$ contains all $\GF(q_0)$-representable matroids.
		
		Let $q$ be maximal such that $\cM$ contains all $\GF(q)$-representable matroids. Since $\PG(a,q') \notin \cM$ for all $q' \ge b$, the value $q$ is well-defined, and moreover there is some $n$ such that $\PG(n-1,q') \notin \cM$ for all $q' > q$. Theorem~\ref{mainexp} thus gives $\tau_a(M) \le \alpha_{\ref{mainexp}}(a,b,n,q) q^{r(M)}$ for all $M\in \cM$, giving~(\ref{mr2}). 
	\end{proof}
	
	Finally, we prove a theorem that will be used in a future paper to obtain stronger results. This theorem may appear to follow directly from Lemmas~\ref{stackwin} and~\ref{maintechnicalmodified}, but is subtly stronger; the independence of the function $h$ on $t$ requires Theorem~\ref{shortcutmain} to be obtained. 
		
	\begin{theorem}\label{portable}
		There are integer-valued functions $f_{\ref{portable}}(a,b,n,q,t)$ and $h_{\ref{portable}}(a,b,n,q)$ so that, for every prime power $q$ and all positive integers $a,b,n,t$ with $a < b$, if $M \in \cU(a,b)$ is weakly round with a $\PG(f_{\ref{portable}}(a,b,n,q,t)-1,q)$-minor and an $(h_{\ref{portable}}(a,b,n,q),q,t)$-stack restriction, then $M$ has a $\PG(n-1,q')$-minor for some $q' > q$. 
	\end{theorem}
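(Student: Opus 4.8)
The plan is to feed the hypotheses into Lemma~\ref{maintechnicalmodified} to obtain a minor carrying both a full-rank projective geometry restriction and a stack restriction of bounded height, and then, instead of invoking Lemma~\ref{stackwin} (whose stack-height requirement $f_{\ref{stackwin}}(a,b,n,q,t)$ grows with $t$), to extract from that configuration a matroid whose $a$-covering number exceeds a large constant multiple of $q^{r}$, and to close with Theorem~\ref{shortcutmain}. The crucial observation is that Lemma~\ref{stackfindflat}'s hypothesis --- a $(k^4,q)$-stack restriction on top of a spanning $\PG(\cdot,q)$ --- involves only the stack \emph{height} $k^4$ and not $t$, and that the rank-$k$ flat it produces already buys a multiplicative factor $q^{k-2a}$ in density; choosing $k$ so that $q^{k-2a}\ge\alpha_{\ref{shortcutmain}}(a,b,n,q)$ then makes Theorem~\ref{shortcutmain} applicable, and this $k$ (hence the stack height we need) does not depend on $t$.

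In detail, I would fix an integer $k$ with $k\ge 2a$ and $q^{k-2a}\ge\alpha_{\ref{shortcutmain}}(a,b,n,q)$, put $n_1=\max(n,k^4)$, and set $h_{\ref{portable}}(a,b,n,q)=(a+1)n_1$ and $f_{\ref{portable}}(a,b,n,q,t)=f_{\ref{maintechnicalmodified}}(a,b,n_1,q,t)$; note that $k$, $n_1$ and $h_{\ref{portable}}$ are independent of $t$. Given $M$ as in the statement, apply Lemma~\ref{maintechnicalmodified} with parameter $n_1$ in place of $n$: this is legitimate since $M$ is weakly round and has the required $\PG(f_{\ref{maintechnicalmodified}}(a,b,n_1,q,t)-1,q)$-minor and $((a+1)n_1,q,t)$-stack restriction. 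If the second outcome occurs then $M$ has a $\PG(n_1-1,q')$-minor for some $q'>q$, and since $n_1\ge n$ this contains a $\PG(n-1,q')$-minor and we are done. Otherwise $M$ has a minor $N$ with a $\PG(r(N)-1,q)$-restriction and an $(n_1,q,t)$-stack restriction.

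Now discard $t$: the $(n_1,q,t)$-stack restriction of $N$ is, after keeping only its first $k^4$ levels (possible since $n_1\ge k^4$), a $(k^4,q)$-stack restriction of $N$. Apply Lemma~\ref{stackfindflat} to $N$ with parameter $k$ to get a minor $N'$ with $r(N')\ge r(N)-k$, a spanning $\PG(r(N')-1,q)$-restriction $R'$, and a rank-$k$ flat $K$ with $\sqcap_{N'}(X,K)\le\tfrac12 r_{N'}(X)$ for all $X\subseteq E(R')$. Let $N_0=N'\con K$ and $r_0=r(N_0)=r(N')-k$. Since $R'$ is spanning, $E(R')$ spans $N_0$; and for every $F\subseteq E(R')$ we have $r_{N_0}(F)=r_{N'}(F)-\sqcap_{N'}(F,K)\ge\tfrac12 r_{N'}(F)$, so every subset of $E(R')$ of rank at most $a$ in $N_0$ has rank at most $2a$ in $R'$. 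Hence any cover of $N_0|E(R')$ by rank-$(\le a)$ sets is a cover of $R'$ by rank-$(\le 2a)$ sets, giving $\tau_a(N_0)\ge\tau_a(N_0|E(R'))\ge\tau_{2a}(R')\ge\tfrac{q^{r(N')}-1}{q^{2a}-1}>q^{r(N')-2a}=q^{k-2a}q^{r_0}\ge\alpha_{\ref{shortcutmain}}(a,b,n,q)\,q^{r_0}$, where the strict inequality uses $r(N')>2a$ (which holds since $r(N')\ge 2n_1-k\ge 2k^4-k>2a$). As $N_0\in\cU(a,b)$, Theorem~\ref{shortcutmain} yields a $\PG(n-1,q')$-minor of $N_0$, and hence of $M$, for some $q'>q$.

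Essentially all of this is bookkeeping once the routing is set up; the only step needing an idea is the decision to bypass Lemma~\ref{stackwin}. The remaining obstacles are routine: checking $r(N')>2a$, checking that $N_0$ is a minor of $M$ (so lies in $\cU(a,b)$), and verifying the height and rank inequalities $n_1\ge k^4$, $r(N)\ge 2n_1$, and so on. I do not anticipate difficulty there, so the genuine content of the theorem is the observation --- already signposted in the remark preceding it --- that the fixed density surplus $q^{k-2a}$ produced by Lemma~\ref{stackfindflat} is exactly what Theorem~\ref{shortcutmain} consumes, whereas Lemma~\ref{stackwin} would instead demand a surplus polynomial in the rank and hence a $t$-dependent stack.
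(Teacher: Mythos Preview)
Your proposal is correct and follows essentially the same route as the paper: choose $k$ so that $q^{k-2a}\ge\alpha_{\ref{shortcutmain}}(a,b,n,q)$, feed the hypotheses into Lemma~\ref{maintechnicalmodified} to obtain a minor with a spanning $\PG(\cdot,q)$-restriction and a $(k^4,q)$-stack, apply Lemma~\ref{stackfindflat} to produce the rank-$k$ flat $K$, and then contract $K$ and invoke Theorem~\ref{shortcutmain}. Your bookkeeping with $n_1=\max(n,k^4)$ is in fact slightly cleaner than the paper's choice $h=\max(n,(a+1)k^4)$, and your density estimate $q^{k-2a}$ is marginally sharper than the paper's $q^{k-2a-1}$, but these are cosmetic differences.
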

	\begin{proof}
		Let $q$ be a prime power and $a,b,n,t$ be positive integers with $a < b$. Let $\alpha = \alpha_{\ref{shortcutmain}}(a,b,n,q)$ and let $k$ be an integer such that $q^{k-2a-1} \ge \alpha$. Set $h_{\ref{portable}}(a,b,n,q) = h = \max(n,(a+1)k^4)$ and $f_{\ref{portable}}(a,b,n,q,t) = m = f_{\ref{maintechnicalmodified}}(a,b,h,q,t)$.  
		
		Let $M \in \cU(a,b)$ be weakly round with a $\PG(m-1,q)$-minor and an $(h,q,t)$-stack restriction $S$. By Lemma~\ref{maintechnicalmodified} and the fact that $h \ge n$, we may assume that $M$ has a minor $N$ with a $\PG(r(N)-1,q)$-restriction and a $(k^4,q,t)$-stack restriction. By Lemma~\ref{stackfindflat}, there is a minor $N'$ of $N$ with a $\PG(r(N')-1,q)$-restriction $R'$ and a rank-$k$ flat $K$ such that $\sqcap_{N'}(X,K) \le \tfrac{1}{2}r_{M'}(X)$ for all $X \subseteq E(R')$. It follows that $\tau_a(N' \con K) \ge \tau_{2a}(R') \ge \frac{q^{r(R')}-1}{q^{2a}-1} \ge q^{k-2a-1}q^{r(N' \con K)} \ge \alpha q^{r(N' \con K)}$, and Theorem~\ref{shortcutmain} gives the result. 
	\end{proof}
	
\section*{References}
\newcounter{refs}

		\begin{list}{[\arabic{refs}]}
{\usecounter{refs}\setlength{\leftmargin}{10mm}\setlength{\itemsep}{0mm}}

\item\label{openprobs}
J. Geelen, 
Some open problems on excluding a uniform matroid, 
Adv. in Appl. Math. 41(4) (2008), 628--637.

\item\label{gk}
J. Geelen, K. Kabell,
Projective geometries in dense matroids, 
J. Combin. Theory Ser. B 99 (2009), 1--8.

\item\label{gkb}
J. Geelen, K. Kabell, 
The {E}rd{\H o}s-{P}\'osa property for matroid circuits,
J. Combin. Theory Ser. B 99 (2009), 407--419.  

\item\label{gkw}
J. Geelen, J.P.S. Kung, G. Whittle,
Growth rates of minor-closed classes of matroids,
J. Combin. Theory. Ser. B 99 (2009), 420--427.

\item\label{part1}
J. Geelen, P. Nelson, 
Projective geometries in exponentially dense matroids. I, 
Submitted.

\item\label{dhj}
J. Geelen, P. Nelson, 
A density Hales-Jewett theorem for matroids, 
Submitted. 

\item\label{gw}
J. Geelen, G. Whittle,
Cliques in dense $\GF(q)$-representable matroids, 
J. Combin. Theory. Ser. B 87 (2003), 264--269.

\item\label{hirschfeld}
J. W. P. Hirschfeld,
Complete Arcs, 
Discrete Math. 174(1-3):177--184 (1997),
Combinatorics (Rome and Montesilvano, 1994).

\item\label{kungroundness}
J.P.S. Kung, 
Numerically regular hereditary classes of combinatorial geometries,
Geom. Dedicata 21 (1986), no. 1, 85--10.

\item\label{thesis}
P. Nelson,
Exponentially Dense Matroids,
Ph.D thesis, University of Waterloo (2011). 

\item \label{oxley}
J. G. Oxley, 
Matroid Theory,
Oxford University Press, New York (2011).
\end{list}

\end{document}